\pgfplotsset{compat=1.13}
\renewcommand{\secref}{Section~\ref}
\newcommand{\inner}[1]{\langle #1 \rangle}
\newcommand{\biginner}[1]{\left\langle #1 \right\rangle}
\newcommand{\eps}{\varepsilon}
\newcommand{\NN}{\mathbb{N}} 
 \newcommand{\RR}{\mathbb{R}}
\newcommand{\calC}{\mathcal{C}}
\DeclareMathOperator{\dist}{dist}
\DeclareMathOperator{\dom}{dom}
\DeclareMathOperator{\sgn}{sgn}
\DeclareMathOperator{\symm}{symm}
\DeclareMathOperator*{\argmin}{arg\,min}
\numberwithin{equation}{section}
\numberwithin{figure}{section}
\newtheorem{thm}{Theorem}
\numberwithin{thm}{section}
\newtheorem{defn}[thm]{Definition}
\newtheorem{lem}[thm]{Lemma}
\newtheorem{prop}[thm]{Proposition}
\newtheorem{ass}[thm]{Assumption}
\newtheorem{rem}[thm]{Remark}
\newtheorem{ex}[thm]{Example}
\renewenvironment{proof}{\emph{Proof.}}{\qed}
\DeclareMathAlphabet{\mathcal}{OMS}{cmsy}{m}{n} 
\journalname{Journal of Mathematical Imaging and Vision}
\begin{document}

\title{Bregman Itoh--Abe methods for sparse optimisation \thanks{This article is dedicated to Mila Nikolova whose keen interest and inspiring discussions have encouraged our research on geometric integration for optimisation. MB acknowledges support from the Leverhulme Trust Early Career Fellowship ECF-2016-611 `Learning from mistakes: a supervised feedback-loop for imaging applications’. ESR and CBS acknowledge support from CHiPS (Horizon 2020 RISE project grant) and from the Cantab Capital Institute for the Mathematics of Information. CBS acknowledges support from the Leverhulme Trust project on ``Breaking the non-convexity barrier”, EPSRC grant Nr EP/M00483X/1, and the EPSRC Centre Nr EP/N014588/1.  Moreover, CBS acknowledges support from the RISE project NoMADS and the Alan Turing Institute.}}
 
\titlerunning{Bregman Itoh--Abe methods for sparse optimisation}        

\author{Martin Benning  \and
        Erlend S. Riis \and
        Carola-Bibiane Sch{\"o}nlieb
}


\institute{Martin Benning \at
              School of Mathematical Sciences, Queen Mary University of London, UK \\
              \email{m.benning@qmul.ac.uk }           
           \and
           Erlend S. Riis \at
              Department of Applied Mathematics and Theoretical Physics, University of Cambridge, UK \\
              \email{e.s.riis@damtp.cam.ac.uk} 
		  \and
           Carola-Bibiane Sch{\"o}nlieb \at
              Department of Applied Mathematics and Theoretical Physics, University of Cambridge, UK \\
              \email{cbs31@cam.ac.uk} 
}

\date{}

\maketitle

\begin{abstract}
In this paper we propose optimisation methods for variational regularisation problems based on discretising the inverse scale space flow with discrete gradient methods. Inverse scale space flow generalises gradient flows by incorporating a generalised Bregman distance as the underlying metric. Its discrete-time counterparts, Bregman iterations and linearised Bregman iterations, are popular regularisation schemes for inverse problems that incorporate a priori information without loss of contrast. Discrete gradient methods are tools from geometric numerical integration for preserving energy dissipation of dissipative differential systems. The resultant Bregman discrete gradient methods are unconditionally dissipative, and achieve rapid convergence rates by exploiting structures of the problem such as sparsity. Building on previous work on discrete gradients for non-smooth, non-convex optimisation, we prove convergence guarantees for these methods in a Clarke subdifferential framework. Numerical results for convex and non-convex examples are presented.

\noindent\textbf{Keywords.} \hspace{0.1cm} Non-convex optimisation, non-smooth optimisation, Bregman iteration, inverse scale space, geometric numerical integration, discrete gradient methods

\noindent\textbf{AMS subject classifications.} \hspace{0.1cm} 49M37, 49Q15, 65K10, 90C26
\end{abstract}

\section{Introduction}
\label{intro}

We consider the constrained optimisation problem
\begin{equation}
\label{eq:main}
\min_{\vec{x} \in \calC} V(\vec{x}),
\end{equation}
for an objective function \(V:\RR^n \to \RR\) and constraint \(\calC \subset \RR^n\). The function \(V\) may be non-convex and non-smooth, as outlined in \assref{ass:JV}. In this paper, we propose and study optimisation schemes by using tools from geometric numerical integration to solve the \emph{inverse scale space} (ISS) flow.

The ISS flow is a differential system which generalises gradient flows by replacing the Euclidean distance by a \emph{Bregman distance}, defined via a convex \emph{Bregman (distance generating) function} \(J: \RR^n \to \overline{\RR}\). The ISS flow is given by
\begin{equation}
\label{eq:ISS}
\begin{aligned}
\dot{\vec{p}}(t) = -\nabla V(\vec{x}(t)), \quad \vec{p}(t) \in \partial J(\vec{x}(t)).
\end{aligned}
\end{equation}
Here, \(\partial J\) is the convex subdifferential of \(J\), to be defined in \secref{sec:preliminaries}, and \(\overline{\RR} := \RR \cup \{\pm \infty\}\). The term \emph{inverse scale space flow} goes back to Scherzer \& Groetsch \citep{sch01}. This flow is typically derived as the continous-time limit of \emph{Bregman iterations}---methods for solving variational regularisation problems. Like gradient flows, the ISS flow is a dissipative system, and its dissipative structure is determined by the function \(J\). This allows one to solve \eqref{eq:main} while incorporating \emph{a priori} information into the optimisation scheme, with the benefits of converging to superior solutions, and doing so faster. For background on the ISS flow and Bregman distances, see \secref{sec:convex}.

Geometric numerical integration deals with numerical integration methods that preserve geometric structures of the continuous system. Such geometric structures include dissipation or conservation of energy, and Lyapunov functions. In recent years, geometric numerical integration---and numerical integration in general---has gained interest within the mathematical optimisation community, as a framework for formulating iterative schemes that are dissipative or amenable to Lyapunov arguments.

We propose to discretise the inverse scale space flow with discrete gradient methods. These are methods from geometric numerical integration that preserve the aforementioned geometric structures in a general setting. In recent papers \citep{rii18smooth,gri17,rii18nonsmooth,rin18}, optimisation schemes based on discretising gradient flows with discrete gradients have been analysed and implemented for various problems. Favourable properties of the discrete gradient methods include unconditional dissipation, i.e. dissipation is ensured for any time step, and the Itoh--Abe discrete gradient method is derivative-free, and has convergence guarantees in the non-smooth, non-convex setting \citep{rii18nonsmooth}. For smooth problems, the theoretical convergence rates of the discrete gradient methods match those of explicit gradient descent and coordinate descent \citep{rii18smooth}. The drawback of these methods is that the updates are in general implicit. Nevertheless, for many simple variational problems, the updates turn out to be explicit.

In this paper, we study the Itoh--Abe discrete gradient method applied to the ISS flow. We prove that the method is well-defined and converges to a set of stationary points for non-smooth, non-convex functions. Furthermore, building on the paper by Miyatake et al. \citep{miy18} which pointed out the equivalence between the discrete gradient methods for linear systems and successive-over-relaxation (SOR) methods, we point out equivalencies of various approaches to least squares problems.

Bregman iterations, and related methods, are closely tied to inverse problems and regularisation methods, particularly in signal processing. We consider numerical examples in this setting as well.

\subsection{Related literature}

Spurred by applications for variational regularisation in image processing and compressed sensing, the ISS flow and the Bregman method have been active areas of research during the last decade. The Bregman iterative method was originally proposed by Osher et al. \citep{osh05} in 2005 for total variation-based image denoising, representing an extension of the Bregman proximal algorithm \citep{cen92, eck93, kiw97, teb92} to non-smooth Bregman functions. Subsequently the ISS flow was derived and analysed by Burger et al. \citep{bur06, bur07, bur13iss, bur16bregman}. Since then, researchers have studied the ISS flow with applications to generalised spectral analysis in a nonlinear setting, i.e. by Burger et al. \citep{bur16}, Gilboa et al. \citep{gil16}, and Schmidt et al. \citep{sch18}. The Bregman method has been studied for \(\ell^1\)-regularisation and compressed sensing by Goldstein \& Osher \citep{gol09} and Yin et al. \citep{yin08}, and extended to primal-dual algorithms by Zhang et al. \citep{zha11}.

The linearised Bregman method was proposed by Yin et al. \citep{yin08} for applications to \(\ell^1\)-regularisation and compressed sensing, and further studied in this setting by Cai et al. \citep{cai09}, and Dong et al. \citep{don10}. An extension for non-convex problems was proposed by Benning et al. \citep{ben17}, proving global convergence for functions that satisfy the Kurdyka--{\L}ojasiewicz property. Lorenz et al. \citep{lor14, sch18kaczmarz} proposed a sparse variant of the Kaczmarz method for linear problems based on linearised Bregman iterations. These and other methods were unified in a Split Feasibility Problems framework for genereal convergence results by Lorenz et al. \citep{lor14sfp}. For further details on Bregman iterations and linearised Bregman methods, we refer to \citep{ben18}.

We review papers on discrete gradient methods for optimisation based on gradient flows. Grimm et al. \citep{gri17} used discrete gradients to solve variational regularisation problems in image analysis, and proved convergence to a set of stationary points for continuously differentiable functions. Ehrhardt et al. \citep{rii18smooth} provided additional analysis for the methods, including convergence rates for smooth, convex problems and Polyak--{\L}ojasiewicz functions, as well as well-posedness of the implicit equation. Ringholm et al. \citep{rin18} applied the Itoh--Abe discrete gradient method to non-convex image problems with Euler's elastica regularisation.

Furthermore, several recent works have looked at discrete gradient methods in more general settings. Riis et al. \citep{rii18nonsmooth} studied the Itoh--Abe discrete gradient method in the setting of derivative-free optimisation of non-smooth, non-convex objective functions, and proved that the method converges to a set of stationary points in the Clarke subdifferential framework. Celledoni et al. \citep{cel18} extended the Itoh--Abe discrete gradient method to optimisation problems defined on Riemannian manifolds. Hern{\'a}ndez-Solano et al. \citep{her15} combined a discrete gradient method with Hopfield networks in order to preserve a Lyapunov function for optimisation problems.

\textcolor{black}{We point out that a central feature of discrete gradient methods is that due to their implicit formulation, no restrictions are required for the time steps. This will also be the case for the analysis in this paper.}

Beyond discrete gradients, other methods from numerical integration have led to notable developments in optimisation in recent years. Recent papers by Su et al. \citep{su16} and Wibisono et al. \citep{wib16} study second-order ODEs based on the continuous-time limit of Nesterov's accelerated gradient descent \citep{nes83}, in order to gain insight into the acceleration phenomenon. In this setting, Wilson et al. \citep{wil16} provided a framework for Lyapunov analysis of optimisation schemes and continuous-time dynamics, and Betancourt et al. \citep{bet18} proposed a framework for symplectic integration for optimisation. On a related note, Scieur et al. \citep{sci17} showed that several accelerated optimisation schemes can be derived as multi-step integration schemes from numerical analysis. An optimisation scheme based on numerically integrating dissipative Hamiltonian conformal systems was proposed by Maddison et al. \citep{mad18}, with the aim of ensuring linear convergence for a larger group of functions than classical methods. Other numerical integration methods, such as implicit Runge-Kutta methods, where energy dissipation is ensured under mild time step restrictions \citep{hai13}, and explicit stabilised methods for solving strongly convex problems \citep{eft18}. Another example is the study of gradient flows in metric spaces and minimising movement schemes \citep{amb08}, concerning gradient flow trajectories under other measures of distance, such as the Wasserstein metric \citep{san17}.

\subsection{Structure and contributions}

The paper is structured as follows. For the remainder of Section 1, we define mathematical notation. In Section 2, we provide preliminary material for convex and non-convex analysis, and introduce Bregman distances. In Section 3, we propose a Bregman discrete gradient method based on the ISS flow, and prove well-posedness and convergence results in a non-convex, non-smooth framework. In Section 4, we discuss particular examples of Bregman discrete gradient methods. In Section 5, we present results from numerical experiments. In Section 6, we conclude.

\subsection{Notation and preliminaries}

Throughout the paper, we denote by \(\|\cdot\|\) the \(\ell^2\)-norm, i.e. \(\|\vec{x}\|^2 = \sum_{i=1}^n |x_i|^2\). For \(p \in [1,+\infty]\), \(\|\cdot\|_p\) denotes the usual \(\ell^p\)-norm. For \(\eps > 0\) and \(\vec{x} \in \RR^n\), we denote by \(B_\eps(\vec{x})\) the open ball \(\{\vec{y} \in \RR^n \; : \; \|\vec{y} - \vec{x}\| < \eps \}\).
We denote by \(\{\vec{e}^1, \ldots, \vec{e}^n\}\) the standard coordinate vectors in \(\RR^n\). We define the extended reals as \(\overline{\RR} := \RR \cup \{ \pm \infty\}\).

For a sequence \((\vec{x}^k)_{k\in\NN} \subset \RR^n\), we denote by \(S\) its limit set, i.e. the set of accumulation points,
\[
S = \set{\vec{x}^* \in \RR^n \; : \; \exists (\vec{x}^{k_j})_{j \in \NN} \; \mbox{ s.t. } \; \vec{x}^{k_j} \to \vec{x}^*}.
\]
For a matrix \(A \in \RR^{m\times n}\), we denote by \(\vec{a}^i \in \RR^n\) its \(i\)th row, and \(A^T\) its transpose. \textcolor{black}{Accordingly, \(a^i_j\) refers to the \(j\)th element of the \(i\)th column of \(A\).}

\textcolor{black}{For \(x \in \RR\), the \emph{sign operator} \(\sgn: \RR \to \{\pm 1, 0\}\) is defined as
\[
\sgn(x) = \begin{cases}
1, \quad &\mbox{if } x > 0, \\
0, &\mbox{if } x = 0, \\
-1, &\mbox{if } x < 0.
\end{cases}
\]}

\section{Preliminaries for convex and non-convex analysis}
\label{sec:preliminaries}

In this section, we provide preliminary material on nonsmooth analysis. In \secref{sec:convex}, we cover convex analysis and Bregman distances, while in \secref{sec:nonconvex} we cover nonconvex, nonsmooth analysis in the Clarke subdifferential framework.

\subsection{Convex analysis}

\label{sec:convex}

We consider functions \(J:\RR^n \to \overline{\RR}\) that are convex, proper, and lower-semicontinuous (see \citep{eke99,roc15} for details on this class of functions).

\begin{defn}[Effective domain]
The \emph{effective domain} of a function \(J: \RR^n \to \overline{\RR}\) is defined as \(\dom(J) = \{\vec{x} \in \RR^n \; : \; J(\vec{x}) < \infty \}\).
\end{defn}

\begin{defn}[Subgradients and subdifferentials]
The \emph{subdifferential} of a convex function \(J:\RR^n \to \overline{\RR}\) at \(\vec{x} \in \RR^n\) is the set of vectors
\[
\partial J(\vec{x}) = \{ \vec{p} \in \RR^n \; : \; J(\vec{y}) - J(\vec{x}) - \inner{\vec{p}, \vec{y}-\vec{x}} \geq 0 \mbox{ for all } \vec{y} \in \RR^n\}.
\]
Vectors in \(\partial J(\vec{x})\) are called \emph{subgradients} of \(J\) at \(\vec{x}\).
\end{defn}
For \(i=1,\ldots,n\), we denote by \([\partial J(\vec{x})]_i\) the projection of \(\partial J(\vec{x})\) onto the \(i\)th coordinate, i.e. \([\partial J(\vec{x})]_i = \{p_i \; : \; \vec{p} \in \partial J(\vec{x})\}\).

For the theory of convex functions and their subdifferentials, see \citep{roc15}.

We consider the \emph{characteristic function} \(\chi_{\calC}\) of a convex, closed set \(\calC \subset \RR^n\), defined by
\begin{equation*}
\chi_{\calC}(\vec{x}) := \begin{cases}
0, \quad &\mbox{if } \vec{x} \in \calC, \\
+\infty, &\mbox{else.}
\end{cases}
\end{equation*}
This is a convex, proper, lower-semicontinuous function, and \(\vec{0} \in \partial \chi_{\calC}(\vec{x})\) for all \(\vec{x} \in \dom(\chi_{\calC}) = \calC\).

We can now define the generalised Bregman distance \citep{bre67} of a convex function \(J\).
\begin{defn}[Bregman distance]
Given \(\vec{p} \in \partial J(\vec{x})\), the \emph{Bregman distance between \(\vec{x}\) and \(\vec{y}\)} is given by
\[
D^p_J(\vec{x},\vec{y}) = J(\vec{y}) - J(\vec{x}) - \inner{\vec{p}, \vec{y}-\vec{x}}.
\]
We refer to \(J\) as the corresponding \emph{Bregman function}.
\end{defn}
\begin{ex}
\label{ex:euclidean}
If \(J(\vec{x}) = \|\vec{x}\|^2/2\), then \(D^p_J(\vec{x},\vec{y}) = \|\vec{x}-\vec{y}\|^2/2\), i.e. the square of the Euclidean distance.
\end{ex}

While Bregman distances are non-negative due to convexity of \(J\), they are not metrics as they do not generally satisfy symmetry or a triangle inequality. However, a related object is the symmetric Bregman distance.
\begin{defn}[Symmetric Bregman distance]
Given \(\vec{p} \in \partial J(\vec{x})\) and \(\vec{q} \in \partial J(\vec{y})\), the \emph{symmetric Bregman distance between \(\vec{x}\) and \(\vec{y}\)} is given by
\[
D^{\symm}_J(\vec{x},\vec{y}) = D^q_J(\vec{y},\vec{x}) + D_J^p(\vec{x},\vec{y}) = \inner{\vec{q}-\vec{p},\vec{y}-\vec{x}}.
\]
\end{defn}

\begin{defn}[\(\mu\)-convexity]
\label{defn:strongly_convex}
A proper, lower-semicontinuous convex function \(J: \RR^n \to \overline{\RR}\) is \emph{\(\mu\)-convex} for \(\mu \geq 0\) if either of the following (equivalent) conditions hold.
\begin{enumerate}[(i)]
	\item The function \(J(\cdot) - \dfrac{\mu}{2} \|\cdot\|^2\) is convex.
	\item \(J(\vec{y}) - J(\vec{x}) - \inner{\vec{p},\vec{y}-\vec{x}} \geq \tfrac{\mu}{2} \|\vec{y}-\vec{x}\|^2\) for all \(\vec{x},\vec{y} \in \RR^n\), \(\vec{p} \in \partial J(\vec{x})\).
\end{enumerate}
If \(\mu > 0\), \(J\) is said to be \emph{strongly convex}.
\end{defn}
For strongly convex Bregman functions, the following property of Bregman distances is immediate.
\begin{prop}[Bregman distance under \(\mu\)-convexity]\hfill
If \(J\) is \(\mu\)-convex, then for all \(\vec{x}, \vec{y} \in \RR^n\),
\[
D_J^p(\vec{x},\vec{y}) \geq \frac{\mu}{2} \|\vec{y}-\vec{x}\|^2, \quad D_J^{\symm}(\vec{x},\vec{y}) \geq \mu\|\vec{y}-\vec{x}\|^2.
\]
\end{prop}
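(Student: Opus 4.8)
The plan is to derive both inequalities directly from condition (ii) of Definition~\ref{defn:strongly_convex}, exploiting the fact that the left-hand side of that condition coincides exactly with a Bregman distance. First I would observe that, for $\vec{p} \in \partial J(\vec{x})$, the defining expression of the Bregman distance, namely $D_J^p(\vec{x},\vec{y}) = J(\vec{y}) - J(\vec{x}) - \inner{\vec{p},\vec{y}-\vec{x}}$, is precisely the quantity appearing on the left-hand side of the inequality in condition (ii). Consequently the first claimed bound $D_J^p(\vec{x},\vec{y}) \geq \tfrac{\mu}{2}\|\vec{y}-\vec{x}\|^2$ is merely a restatement of $\mu$-convexity and needs no additional argument.

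For the second inequality I would invoke the definition of the symmetric Bregman distance, which decomposes it as $D_J^{\symm}(\vec{x},\vec{y}) = D_J^q(\vec{y},\vec{x}) + D_J^p(\vec{x},\vec{y})$ for $\vec{q} \in \partial J(\vec{y})$. I would then apply the first bound to each summand separately: once to $D_J^p(\vec{x},\vec{y})$ with the pair $(\vec{x},\vec{y})$, and once to $D_J^q(\vec{y},\vec{x})$ with the pair $(\vec{y},\vec{x})$. Using the symmetry $\|\vec{x}-\vec{y}\| = \|\vec{y}-\vec{x}\|$, both applications yield the same quadratic term $\tfrac{\mu}{2}\|\vec{y}-\vec{x}\|^2$, and adding them gives $D_J^{\symm}(\vec{x},\vec{y}) \geq \mu\|\vec{y}-\vec{x}\|^2$.

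There is no substantive obstacle here: the statement is an immediate corollary of the definitions, and the only thing to verify is the routine bookkeeping that the two invocations of condition (ii) produce identical quadratic terms summing to $\mu\|\vec{y}-\vec{x}\|^2$. The entire argument is purely definitional, which is why the proposition is stated as following ``immediately'' from $\mu$-convexity.
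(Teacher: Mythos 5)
Your proof is correct and is exactly the argument the paper has in mind: the paper states this proposition without proof, calling it \emph{immediate}, precisely because the first bound is condition (ii) of Definition~2.6 restated in Bregman-distance notation, and the second follows by applying it to both terms of the symmetric decomposition. Nothing is missing in your bookkeeping.
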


\subsection{Non-convex subdifferential analysis}
\label{sec:nonconvex}

We summarise the main concepts of the Clarke subdifferential for locally Lipschitz continuous, non-smooth, non-convex functions \(V: \RR^n \to \RR\), and refer to \citep{cla90} for further details.

\begin{defn}[Lipschitz continuity]
\(V\) is \emph{Lipschitz of rank \(L\) near \(\vec{x}\)} if there exists \(\eps > 0\) such that for all \(\vec{y}, \vec{z} \in B_\eps(\vec{x})\), one has
\[
|V(\vec{y}) - V(\vec{z})| \leq L\|\vec{y}-\vec{z}\|.
\]
\(V\) is locally Lipschitz continuous if the above property holds for all \(\vec{x} \in \RR^n\).
\end{defn}

\begin{defn}[Clarke directional derivative]
\label{defn:clarke_derivative}
For \(V\) Lipschitz near \(\vec{x}\) and for a vector \(\vec{d} \in \RR^n\),  the \emph{Clarke directional derivative} is given by
\[
V^o(\vec{x} ; \vec{d}) = \limsup_{\vec{y} \to \vec{x}, \; \lambda \downarrow 0} \dfrac{V(\vec{y}+\lambda \vec{d}) - V(\vec{y})}{\lambda}.
\]
\end{defn}
\begin{defn}[Clarke subdifferential]
\label{defn:subdifferential}
Let \(V\) be locally Lipschitz continuous and \(\vec{x} \in \RR^n\). The \emph{Clarke subdifferential} of \(V\) at \(\vec{x}\) is given by
\[
\partial V(\vec{x}) = \set{\vec{p} \in \RR^n \; : \; V^o(\vec{x}; \vec{d}) \geq \inner{\vec{d}, \vec{p}} \mbox{ for all } \vec{d} \in \RR^n}.
\]
An element of \(\partial V(\vec{x})\) is called a \emph{Clarke subgradient}.
\end{defn}
The Clarke subdifferential was introduced by Clarke in \citep{cla73}. It is well-defined for locally Lipschitz functions, coincides with the standard subdifferential for convex functions \citep[Proposition 2.2.7]{cla90}, and coincides with the derivative at points of strict differentiability \citep[Proposition 2.2.4]{cla90}. We additionally state two useful results, both of which can be found in Chapter 2 of \citep{cla90}.
\begin{prop}[Properties of Clarke subdifferential] \hfill
\label{prop:subdiff_properties}
Suppose \(V\) is locally Lipschitz continuous. Then
\begin{enumerate}[(i)]
	\item \(\partial V(\vec{x})\) is nonempty, convex and compact, and if \(V\) is Lipschitz of rank \(L\) near \(\vec{x}\), then \(\partial V(\vec{x}) \subseteq B_L(\vec{0})\).
	\item \(\partial V(\vec{x})\) is outer semicontinuous. That is, for all \(\eps > 0\), there exists \(\delta > 0\) such that
	\[
	\partial V(\vec{y}) \subset \partial V(\vec{x}) + B_\eps(\vec{0}), \quad \mbox{ for all } \vec{y} \in B_\delta(\vec{x}).
	\]
\end{enumerate}
\end{prop}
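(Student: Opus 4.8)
The plan is to build on three basic properties of the Clarke directional derivative $V^o(\vec{x};\cdot)$, all of which follow directly from its $\limsup$ definition together with the local Lipschitz hypothesis. First I would verify that $\vec{d} \mapsto V^o(\vec{x};\vec{d})$ is \emph{sublinear}: positive homogeneity $V^o(\vec{x};\lambda\vec{d}) = \lambda V^o(\vec{x};\vec{d})$ for $\lambda \geq 0$ is immediate by rescaling the difference quotient, and subadditivity $V^o(\vec{x};\vec{d}_1+\vec{d}_2) \leq V^o(\vec{x};\vec{d}_1) + V^o(\vec{x};\vec{d}_2)$ follows by splitting the perturbation and using that the $\limsup$ of a sum is dominated by the sum of the $\limsup$s. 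Second, if $V$ is Lipschitz of rank $L$ near $\vec{x}$, then every difference quotient in the definition is bounded in absolute value by $L\|\vec{d}\|$, so $|V^o(\vec{x};\vec{d})| \leq L\|\vec{d}\|$. Third, and this is what part (ii) hinges on, the map $\vec{y} \mapsto V^o(\vec{y};\vec{d})$ is upper semicontinuous: since the $\limsup$ defining $V^o(\vec{y};\vec{d})$ only samples points $\vec{z}$ near $\vec{y}$, those points also lie near $\vec{x}$ when $\vec{y}$ is close to $\vec{x}$, and hence $\limsup_{\vec{y}\to\vec{x}} V^o(\vec{y};\vec{d}) \leq V^o(\vec{x};\vec{d})$.

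For part (i), convexity and closedness are formal: $\partial V(\vec{x})$ is the intersection over all $\vec{d}$ of the closed half-spaces $\{\vec{p} : \inner{\vec{d},\vec{p}} \leq V^o(\vec{x};\vec{d})\}$, and an arbitrary intersection of closed convex sets is closed and convex. Boundedness, hence compactness, comes from the Lipschitz bound: for $\vec{p} \in \partial V(\vec{x})$ with $\vec{p} \neq \vec{0}$, testing against $\vec{d} = \vec{p}/\|\vec{p}\|$ gives $\|\vec{p}\| = \inner{\vec{d},\vec{p}} \leq V^o(\vec{x};\vec{d}) \leq L$, which is the asserted bound $\partial V(\vec{x}) \subseteq B_L(\vec{0})$. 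Nonemptiness is the one genuinely analytic step: since $V^o(\vec{x};\cdot)$ is a finite sublinear functional, the Hahn--Banach theorem supplies a linear functional dominated by it, i.e. a vector $\vec{p}$ with $\inner{\vec{d},\vec{p}} \leq V^o(\vec{x};\vec{d})$ for every $\vec{d}$, which is precisely a point of $\partial V(\vec{x})$. Equivalently, $V^o(\vec{x};\cdot)$ is the support function of the nonempty compact convex set $\partial V(\vec{x})$.

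For part (ii), I would argue by contradiction using sequential compactness. Suppose outer semicontinuity fails at $\vec{x}$; then there exist $\eps > 0$ and sequences $\vec{y}^k \to \vec{x}$ and $\vec{p}^k \in \partial V(\vec{y}^k)$ with $\vec{p}^k \notin \partial V(\vec{x}) + B_\eps(\vec{0})$. Fixing a neighbourhood on which $V$ is Lipschitz of rank $L$, part (i) gives $\|\vec{p}^k\| \leq L$ for large $k$, so a subsequence converges, $\vec{p}^{k_j} \to \vec{p}^*$. Passing to the limit in $\inner{\vec{d},\vec{p}^{k_j}} \leq V^o(\vec{y}^{k_j};\vec{d})$ and invoking the upper semicontinuity established above yields $\inner{\vec{d},\vec{p}^*} \leq V^o(\vec{x};\vec{d})$ for all $\vec{d}$, so $\vec{p}^* \in \partial V(\vec{x})$. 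But then $\vec{p}^{k_j}$ eventually lies within $\eps$ of the point $\vec{p}^* \in \partial V(\vec{x})$, contradicting the choice of the sequence.

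The formal parts (convexity, closedness, boundedness) are straightforward; the two load-bearing steps are the nonemptiness of $\partial V(\vec{x})$, which rests on Hahn--Banach via the sublinearity of $V^o(\vec{x};\cdot)$, and the upper semicontinuity of $\vec{y} \mapsto V^o(\vec{y};\vec{d})$, which drives the limiting argument in (ii). I expect the latter to be the main obstacle to write cleanly, since it requires carefully tracking that the $\limsup$ defining $V^o$ at the nearby base points $\vec{y}^k$ is controlled by the $\limsup$ at $\vec{x}$, which is a two-parameter limit that must be handled with some care.
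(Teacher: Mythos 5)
Your proof is correct, but there is nothing in the paper to compare it against: the paper does not prove this proposition, it states both parts as known results and cites Chapter 2 of Clarke's monograph \citep{cla90} (the relevant statements there are Propositions 2.1.1, 2.1.2 and 2.1.5). What you have written is essentially a faithful reconstruction of the arguments in that reference: sublinearity of \(V^o(\vec{x};\cdot)\) together with Hahn--Banach (equivalently, the support-function characterisation) for nonemptiness; the bound \(|V^o(\vec{x};\vec{d})| \leq L\|\vec{d}\|\) for boundedness; the representation of \(\partial V(\vec{x})\) as an intersection of closed half-spaces for convexity and closedness; and upper semicontinuity of \(\vec{y} \mapsto V^o(\vec{y};\vec{d})\) feeding a compactness-and-contradiction argument for outer semicontinuity. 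All the load-bearing steps you identify are sound; in particular your sketch of the upper semicontinuity is the right one (for \(\vec{y}^k \to \vec{x}\), pick points \(\vec{z}^k\) and \(\lambda_k\) within \(1/k\) of realising \(V^o(\vec{y}^k;\vec{d})\); then \(\vec{z}^k \to \vec{x}\) and \(\lambda_k \downarrow 0\), so the \(\limsup\) defining \(V^o(\vec{x};\vec{d})\) dominates). One small remark: your argument yields \(\|\vec{p}\| \leq L\), i.e.\ containment in the \emph{closed} ball of radius \(L\), and that is in fact the correct statement --- the example \(V(x) = L|x|\) at the origin, where \(\partial V(0) = [-L,L]\), shows the open-ball containment \(\partial V(\vec{x}) \subseteq B_L(\vec{0})\) as literally written in the paper (with \(B_L(\vec{0})\) open, per its notation section) cannot hold in general; this is an imprecision in the paper's statement, not a gap in your proof.
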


In this paper, we will consider constrained optimisation problems \(\argmin_{\vec{x} \in \calC} V(\vec{x})\) for a closed, convex subset \(\calC\). We assume in the convergence analysis that \(\calC\) is box constraints of the form \(\calC = \otimes_{i=1}^n [l_i, u_l]\). To have a formal notion of first-order optimality in this setting, we define the \emph{tangent cone}.
\begin{defn}[Tangent cone]
For a convex, closed set \(\calC \subset \RR^n\), the \emph{tangent cone at \(\vec{x} \in \calC\)}, denoted by \(T(\calC,\vec{x})\), is the closure of the set of vectors \(\vec{v} \in \RR^n\) such that there is \(\delta > 0\) such that for all \(\eps \in (0,\delta)\), \(\vec{x} + \eps \vec{v} \in \calC\).
\end{defn}

\begin{defn}[Clarke stationarity]
Let \(V: \RR^n \to \RR\) be a locally Lipschitz continuous function and \(\calC \subset \RR^n\) a closed, convex set. A point \(\vec{x}^* \in \RR^n\) is a \emph{Clarke stationary point of \(V\) restricted to \(\calC\)} if \(V^o(\vec{x}^*; \vec{d}) \geq 0\) for all \(\vec{d} \in T(\calC,\vec{x}^*)\).
\end{defn}
As the following proposition shows, the above definition generalises minimisers for convex functions, and Clarke stationary points in the unrestricted case.
\begin{prop}[Stationary versus minimal]
Let \(V:\RR^n \to \RR\) be a convex function and \(\calC \subset \RR^n\) be closed and convex. Then \(\vec{x}^* \in \RR^n\) is a Clarke stationary point restricted to \(\calC\) if and only if \(\vec{x}^* \in \argmin_{\vec{x} \in \calC} V(\vec{x})\).
\end{prop}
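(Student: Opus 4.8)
The plan is to reduce everything to the ordinary one-sided directional derivative and then prove the two implications separately, with convexity of \(\calC\) handling one direction and continuity of \(V^o(\vec{x}^*;\cdot)\) handling the other. As a preliminary reduction I would record that, since \(V\) is convex, its Clarke subdifferential coincides with the convex subdifferential (as noted following Definition~\ref{defn:subdifferential}). Consequently \(V^o(\vec{x}^*;\cdot)\) is the support function of \(\partial V(\vec{x}^*)\) and therefore equals the one-sided directional derivative \(V'(\vec{x}^*;\vec{d}) = \lim_{\lambda \downarrow 0} \lambda^{-1}\bigl(V(\vec{x}^* + \lambda\vec{d}) - V(\vec{x}^*)\bigr)\). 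I would also remark that both statements implicitly require \(\vec{x}^* \in \calC\), since the tangent cone \(T(\calC,\vec{x}^*)\) is only defined at points of \(\calC\). The central convex-analytic fact I would invoke is that for convex \(V\) the difference quotient \(\lambda \mapsto \lambda^{-1}\bigl(V(\vec{x}^*+\lambda\vec{d}) - V(\vec{x}^*)\bigr)\) is nondecreasing on \((0,\infty)\); this yields at once the existence of the limit defining \(V'\) and the chord inequality \(V(\vec{x}^* + \vec{d}) - V(\vec{x}^*) \geq V'(\vec{x}^*;\vec{d})\).

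For the direction ``minimal \(\Rightarrow\) stationary'', I would take an arbitrary \(\vec{y} \in \calC\) and set \(\vec{d} = \vec{y} - \vec{x}^*\). Convexity of \(\calC\) gives \((1-\eps)\vec{x}^* + \eps\vec{y} = \vec{x}^* + \eps\vec{d} \in \calC\) for all \(\eps \in (0,1)\), so \(\vec{d} \in T(\calC,\vec{x}^*)\). Clarke stationarity then yields \(V'(\vec{x}^*;\vec{d}) = V^o(\vec{x}^*;\vec{d}) \geq 0\), and the chord inequality above gives \(V(\vec{y}) - V(\vec{x}^*) \geq V'(\vec{x}^*;\vec{d}) \geq 0\). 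Since \(\vec{y} \in \calC\) was arbitrary, \(\vec{x}^* \in \argmin_{\vec{x}\in\calC} V(\vec{x})\). Here no closure subtlety arises, because the chord direction already lies in the set whose closure defines the tangent cone.

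For the converse ``stationary \(\Rightarrow\) minimal'', I would first treat directions \(\vec{d}\) in the pre-closure set, i.e.\ those for which there is \(\delta > 0\) with \(\vec{x}^* + \eps\vec{d} \in \calC\) for all \(\eps \in (0,\delta)\). For such \(\vec{d}\), minimality gives \(V(\vec{x}^* + \eps\vec{d}) \geq V(\vec{x}^*)\), so every difference quotient is nonnegative and hence \(V^o(\vec{x}^*;\vec{d}) = V'(\vec{x}^*;\vec{d}) \geq 0\). The remaining point—and the only subtle one—is that \(T(\calC,\vec{x}^*)\) is the \emph{closure} of this set, so the inequality must be extended to limit points. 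This is where I would use that \(V^o(\vec{x}^*;\cdot)\) is sublinear and bounded by \(L\|\cdot\|\) (by Proposition~\ref{prop:subdiff_properties}(i), with \(L\) a local Lipschitz rank of \(V\) at \(\vec{x}^*\)), hence Lipschitz and in particular continuous; the inequality \(V^o(\vec{x}^*;\cdot)\geq 0\) therefore passes to the closure, establishing Clarke stationarity. I expect this closure step to be the main obstacle, since the other steps are immediate once the identity \(V^o = V'\) and the monotone-difference-quotient fact are in place.
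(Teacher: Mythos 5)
Your proof is correct in substance, but note first that your two direction labels are swapped: the argument you present under ``minimal \(\Rightarrow\) stationary'' (chord direction \(\vec{d} = \vec{y} - \vec{x}^*\), stationarity plus the chord inequality) in fact deduces minimality \emph{from} stationarity, while the argument you present under ``stationary \(\Rightarrow\) minimal'' (nonnegative difference quotients on the pre-closure set, then passage to the closure) deduces stationarity \emph{from} minimality. Taken together the two arguments do establish both implications, so this is an expository slip rather than a mathematical gap, but it should be fixed.

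Methodologically, you take a genuinely different route from the paper. The paper's proof is essentially a citation: it invokes Theorem 4.19 of \citep{jah07} --- which is precisely the equivalence, for convex \(V\) and convex \(\calC\), between minimality over \(\calC\) and nonnegativity of the classical directional derivative on the tangent cone --- together with the fact that for convex functions the Clarke directional derivative coincides with the classical one \citep[Proposition 2.2.7]{cla90}. You use the same identification \(V^o(\vec{x}^*;\cdot) = V'(\vec{x}^*;\cdot)\), but then reprove the cited theorem from first principles: monotonicity of convex difference quotients gives the chord inequality; convexity of \(\calC\) places chord directions in the tangent cone (indeed in its pre-closure, so no closure issue arises in that direction); and Lipschitz continuity of \(V^o(\vec{x}^*;\cdot)\), obtained from sublinearity plus the bound \(L\|\cdot\|\) of Proposition~\ref{prop:subdiff_properties}(i), lets the inequality pass to the closure in the other direction. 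What your version buys is self-containedness and an explicit view of where each hypothesis enters --- in particular the closure step, the only delicate point, which the paper's citation hides inside Jahn's theorem. What the paper's version buys is brevity. Once the labels are corrected, your argument is a valid and arguably more instructive substitute for the paper's proof.
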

\begin{proof}
This follows directly from \citep[Theorem 4.19]{jah07}, noting that for convex functions the Clarke directional derivative coincides with the classical directional derivative \citep[Proposition 2.2.7]{cla90}.
\end{proof}

\section{The discrete gradient method for the inverse scale space flow}

In what follows, we discuss the inverse scale space flow, and propose a \emph{Bregman discrete gradient method} by using discrete gradients to solve the differential system.

\subsection{Inverse scale space flow and Bregman methods}

For a convex function \(J: \RR^n \to \overline{\RR}\), objective function \(V: \RR^n \to \RR\) and starting points \(\vec{x}(0) = \vec{x}^0 \in \RR^n\), \(\vec{p}(0) \in \partial J(\vec{x}^0)\), the ISS flow is the dissipative differential system given by \eqref{eq:ISS}. If \(J\) were twice continuously differentiable and \(\mu\)-convex, then \eqref{eq:ISS} could be rewritten as
\[
\dot{\vec{x}}(t) = - (\nabla^2 J(\vec{x}(t)))^{-1} \nabla V(\vec{x}(t)),
\]
and the energy \(V(\vec{x}(t))\) would dissipate over time as
\begin{align*}
\dod{}{t}V(\vec{x}(t)) &= \biginner{\dot{\vec{x}}(t),\nabla V(\vec{x}(t))} \\
&= - \biginner{\dot{\vec{x}}(t), \nabla^2 J(\vec{x}(t))\dot{\vec{x}}(t) } \leq -\mu \|\dot{\vec{x}}(t)\|^2.
\end{align*}

We briefly discuss variants of Bregman methods as discretisations of \eqref{eq:ISS}. The Bregman method is derived by backward Euler discretisation of \eqref{eq:ISS}, and is given by
\begin{equation*}
\vec{p}^{k+1} = \vec{p}^k - \tau_k \nabla V(\vec{x}^{k+1}), \quad \vec{p}^{k+1} \in \partial J(\vec{x}^{k+1})
\end{equation*}
which can be rewritten as
\begin{equation}
\label{eq:bregman_method}
\vec{x}^{k+1} = \argmin_{\vec{x}\in\RR^n} V(\vec{x}) + \frac{1}{\tau_k} D_J^{\vec{p}^k}(\vec{x}^k, \vec{x}).
\end{equation}
From \eqref{eq:bregman_method}, we see that the Bregman method is dissipative, as
\begin{align*}
V(\vec{x}^{k+1}) - V(\vec{x}^k) &\leq - \frac{1}{\tau_k} D_J^{\vec{p}^k}(\vec{x}^k,\vec{x}^{k+1}) \\
&\leq - \frac{\mu}{2 \tau_k} \|\vec{x}^k - \vec{x}^{k+1}\|^2.
\end{align*}

Similarly, the linearised Bregman method is derived by forward Euler discretisation of \eqref{eq:ISS}, and is given by
\begin{equation*}
\vec{p}^{k+1} = \vec{p}^k - \tau_k \nabla V(\vec{x}^{k}), \quad \vec{p}^{k+1} \in \partial J(\vec{x}^{k+1})
\end{equation*}
or equivalently
\begin{equation*}
\vec{x}^{k+1} = \argmin_{\vec{x}\in\RR^n} V(\vec{x}^k) + \inner{\nabla V(\vec{x}^k), \vec{x} - \vec{x}^k} + \frac{1}{\tau_k} D_J^{\vec{p}^k}(\vec{x}^k, \vec{x}).
\end{equation*}

The ISS flow and Bregman methods are considered for solving ill-conditioned linear systems \(A\vec{x} = \vec{b}\), with the objective function
\[
V(\vec{x}) = \frac{1}{2}\|A\vec{x} - \vec{b}\|^2.
\]
In this setting, iterates of both the Bregman method and the linearised Bregman method converge \citep{ben18, lor14sfp} to a solution of 
\[
\min_{\vec{x}\in\RR^n} \{J(\vec{x}) \mbox{ s.t. } A\vec{x} = \vec{b}\}.
\]
Furthermore, applications of the ISS flow include image denoising with reduced contrast-loss and staircasing effects \citep{osh05}, recovering eigenfunctions \citep{sch18}, and identifying sparse or low-rank structures \citep{yin08}.

We make the following assumptions on the the objective function \(V\), the constraints \(\calC\), as well as the Bregman function \(J\).
\begin{ass}
\label{ass:JV}\hfill
\begin{enumerate}[a)]
	\item The function \(V: \RR^n \to \RR\) is locally Lipschitz continuous and bounded below.
	\item \(\vec{x}^* \in \RR^n\) is a Clarke stationary point of \(V\) restricted to \(\calC\) if and only if for all coordinate vectors \(\vec{e}^i\), we have \(V^o(\vec{x}^*; \vec{e}^i), V^o(\vec{x}^*; -\vec{e}^i) \geq 0\).
	\item The set \(\calC \subset \RR^n\)	consists of coordinate-wise box constraints, i.e. \(\calC = \otimes_{i=1}^n [l_i,u_i]\).
	\item The function \(J:\RR^n\to \overline{\RR}\) is proper, lower-semicontinuous, and \(\mu\)-convex with \(\mu > 0\). Furthermore, \(J(\vec{x}) = \sum_{i=1}^n j_i(x_i) + \chi_{[l_i,u_i]}(x_i)\), where \(j_i: \RR \to \overline{\RR}\) are convex and Lipschitz continuous on \([l_i,u_i]\) for each \(i\).
\end{enumerate}
\end{ass}

\subsection{The Bregman discrete gradient method}

In what follows, we define discrete gradients, and propose the Bregman discrete gradient method by discretising the ISS flow.
\begin{defn}[Discrete gradient]
\label{defn:dg}
Let \(V\) be a continuously differentiable function. A \emph{discrete gradient} is a continuous map \(\overline{\nabla} V: \RR^n \times \RR^n \to \RR^n\) such that for all \(\vec{x}, \vec{y} \in \RR^n\),
\begin{alignat}{2}
\label{eq:mean_value}
\inner{\overline{\nabla} V(\vec{x},\vec{y}), \vec{y}-\vec{x}} &= V(\vec{y}) - V(\vec{x}) & \quad &\mbox{(Mean value)}, \\
\label{eq:consistency}\lim_{\vec{y} \to \vec{x} }\overline{\nabla} V(\vec{x},\vec{y}) &= \nabla V(\vec{x}) & \quad &\mbox{(Consistency)}.
\end{alignat}
\end{defn}

Discrete gradients are tools from geometric numerical integration. In geometric integration, one studies methods for numerically solving ODEs while preserving certain structures of the continuous system---see \citep{hai06, mcl01} for an introduction. Discrete gradients are tools for solving first-order ODEs that preserve energy conservation laws, dissipation laws, and Lyapunov functions \citep{gon96, ito88, mcl99, qui96}.

\emph{The Itoh--Abe discrete gradient} \citep{ito88} (also known as the coordinate increment discrete gradient) is given by
\begin{equation}
\label{eq:IA}
\overline{\nabla} V(\vec{x},\vec{y}) = \begin{pmatrix}
\frac{V\del{y_1, x_2, \ldots, x_n} - V(\vec{x})}{y_1 - x_1}  \\
\frac{V\del{y_1, y_2, x_3, \ldots, x_n} - V\del{y_1, x_2, \ldots, x_n}}{y_2 - x_2} \\
\vdots \\
\frac{V(\vec{y}) - V\del{y_1, \ldots, y_{n-1}, x_n}}{y_n - x_n}
\end{pmatrix},
\end{equation}
where \(0/0\) is interpreted as \(\partial_i V(\vec{x})\).

The Itoh--Abe discrete gradient is derivative-free, and is evaluated by successively computing difference quotients. For optimisation problems where the gradient is expensive to compute (e.g. \citep{rin18}), or unavailable (e.g. \citep{rii18nonsmooth}), Itoh--Abe discrete gradient methods are useful for ensuring convergence guarantees and competitive convergence rates \citep{rii18smooth} without requiring derivatives.

We propose the \emph{Bregman discrete gradient method} as follows. For a starting point \(\vec{x}^0\in\RR^n\), subgradient \(\vec{p}^0 \in \partial J(\vec{x}^0)\), and time steps \((\tau_k)_{i \in \NN} > 0\), solve for \(k = 0,1,\ldots\),
\begin{equation}
\label{eq:bdg}
\begin{aligned}
\vec{p}^{k+1} &= \vec{p}^k - \tau_k \overline{\nabla} V(\vec{x}^k, \vec{x}^{k+1})  \\
\vec{p}^{k+1} &\in \partial J(\vec{x}^{k+1}).
\end{aligned}
\end{equation}
This scheme preserves the dissipative structure of the ISS flow and (linearised) Bregman methods, as we see by applying the mean value property \eqref{eq:mean_value} and \eqref{eq:bdg}.
\begin{align}
V(\vec{x}^k) - V(\vec{x}^{k+1}) &= \inner{\vec{x}^k - \vec{x}^{k+1}, \overline{\nabla} V(\vec{x}^k, \vec{x}^{k+1})} \nonumber \\
&=  \frac{1}{\tau_k} \inner{\vec{x}^k - \vec{x}^{k+1}, \vec{p}^k - \vec{p}^{k+1} } \nonumber \\
&= \frac{1}{\tau_k} D^{\symm}_{J}(\vec{x}^k, \vec{x}^{k+1}) \nonumber \\
&\geq \frac{\mu}{\tau_k} \|\vec{x}^k - \vec{x}^{k+1}\|^2. \label{eq:bdg_dissipation}
\end{align}
Furthermore, if we plug in \(J(\vec{x}) = \|\vec{x}\|^2/2\), we recover the discrete gradient method for gradient flows \citep{rii18smooth}. \textcolor{black}{Observe that the dissipative structure holds for arbitrary positive time steps \(\tau_k > 0\). We thus only require that \(\tau_k \in [\tau_{\min} , \tau_{\max}]\) for arbitrary bounds \(\tau_{\min}, \tau_{\max}  >0\).}

By \assref{ass:JV} d), the subdifferential of \(J\) is separable in the coordinates, i.e.
\[
\partial J(\vec{x}) = \prod_{i=1}^n \partial \chi_{[l_i,u_i]}(x_i) + \partial j_i(x_i).
\]
It follows that solving the Bregman Itoh--Abe equation \eqref{eq:bdg} corresponds to successively solving \(n\) scalar inclusions, i.e. given \(\vec{x}^k\), and \(\vec{p}^k\), we want \(\vec{x}^{k+1}\) and \(\vec{p}^{k+1}\) that solve
\textcolor{black}{
\begin{equation}
\label{eq:bia}
\begin{aligned}
p^{k+1}_i &= p^k_i - \tau_{k,i} \frac{V(\vec{y}^{k,i}) - V(\vec{y}^{k,i-1})}{x^{k+1}_i-x^k_i}, \\
p^{k+1}_i &\in \partial j_i(y^{k,i}_i) + \partial \chi_{[l_i,u_i]}(y^{k,i}_i), \\
\vec{y}^{k,i} &= [x^{k+1}_1,\ldots, x^{k+1}_i, x^k_{i+1},\ldots, x^k_n],\\
i &= 1,\ldots,n.
\end{aligned}
\end{equation}}
Here \(\vec{y}^{k,i}\) denotes \([x^{k+1}_1, \ldots, x^{k+1}_i, x^k_{i+1}, \ldots, x^k_n]^T\). For a choice of \(v^k_i \in [\partial V(\vec{y}^{k,i-1})]_i\), if \(p^k_i - \tau_{k,i} v^k_{i} \in [\partial J(\vec{y}^{k,i-1})]_i\), then we consider \(x^{k+1}_i = x^k_i\) and \(p^{k+1}_i = p^{k}_i - \tau_{k,i} v^k_{i}\) an admissible update.

\begin{rem}
While discrete gradients are not defined for non-smooth functions \(V\), the resultant Bregman discrete gradient method \eqref{eq:bia} is still well-defined, provided the function \(V\) has a well-defined Clarke subdifferential. This is the case e.g. if \(V\) is locally Lipschitz continuous \citep{cla90}. Furthermore, note that the properties used to derive the dissipative structure \eqref{eq:bdg_dissipation} holds in this setting.
\end{rem}

\section{Theoretical results}
\label{sec:convergence}

\textcolor{black}{In this section, we prove that a solution to the Bregman discrete gradient method \eqref{eq:bia} exists, and provide conditions in which the update is unique. Furthermore, we prove that the accumulation points of the iterates \((\vec{x}^k)_{k\in\NN}\) from the Bregman Itoh--Abe method are Clarke stationary.}

\subsection{Well-posedness and preliminary results}

\textcolor{black}{The Bregman Itoh--Abe scheme \eqref{eq:bia} is in general implicit. We therefore want to ensure that an update exists and is easy to compute. Furthermore, we want to know under what conditions the updates are unique. In what follows, we adress these questions. The corresponding analysis generalises that from the previous papers on discrete gradient methods for optimisation \citep{rii18smooth,rii18nonsmooth}.}

\textcolor{black}{We first show that an update (not necessarily unique) always exists.}
\begin{lem}[Existence]
\label{lem:existence}
For any \(\tau > 0\), \(\vec{x}^k \in \RR^n\), and \(\vec{p}^k \in \partial J(\vec{x}^k)\), there exists an update \((\vec{x}^{k+1}, \vec{p}^{k+1})\) that satisfies \eqref{eq:bia}.
\end{lem}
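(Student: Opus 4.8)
The plan is to reduce the coupled system \eqref{eq:bia} to \(n\) independent scalar problems and to solve each by a fixed-point argument. Since the subdifferential of \(J\) is separable by \assref{ass:JV} d) and the Itoh--Abe scheme is solved sequentially in the coordinates, I would argue by induction on \(i\): assuming \(x^{k+1}_1,\ldots,x^{k+1}_{i-1}\) have been determined, the update of coordinate \(i\) depends only on these and on the frozen tail \(x^k_{i+1},\ldots,x^k_n\). Freezing all coordinates but the \(i\)-th defines a scalar function \(\phi(s) := V(x^{k+1}_1,\ldots,x^{k+1}_{i-1},s,x^k_{i+1},\ldots,x^k_n)\), which is locally Lipschitz and hence Lipschitz on the compact interval \([l_i,u_i]\). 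Writing \(a := x^k_i\), \(p := p^k_i\), \(\tau := \tau_{k,i}\), and \(J_i := j_i + \chi_{[l_i,u_i]}\), the scalar problem is to find \(s \in [l_i,u_i]\) and \(q\) with \(q = p - \tau\, h(s)\) and \(q \in \partial J_i(s)\), where \(h(s) := (\phi(s)-\phi(a))/(s-a)\) is the difference quotient and \(h(a)\) is interpreted through \(\partial\phi(a)\).

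The key device is the convex conjugate. By \assref{ass:JV} d), \(J_i\) is proper, lower-semicontinuous, and \(\mu\)-convex with \(\mu > 0\) and effective domain \([l_i,u_i]\); hence \(J_i^*\) is finite and differentiable on all of \(\RR\) with \(\nabla J_i^* = (\partial J_i)^{-1}\) single-valued, \((1/\mu)\)-Lipschitz, and with range in \(\dom J_i = [l_i,u_i]\). Thus \(q \in \partial J_i(s)\) is equivalent to \(s = \nabla J_i^*(q)\), and the scalar problem becomes the fixed-point equation \(s = \nabla J_i^*(p - \tau h(s))\) on the compact convex interval \([l_i,u_i]\). Because \(\phi\) is Lipschitz on \([l_i,u_i]\), the quotient \(h\) is bounded there, so \(s \mapsto \nabla J_i^*(p - \tau h(s))\) is a self-map of \([l_i,u_i]\), and it is continuous for \(s \neq a\).

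To cope with the singularity at \(s = a\) I would pass to the set-valued envelope \(H(s) := \{h(s)\}\) for \(s \neq a\) and \(H(a) := \partial\phi(a)\). First I would show, directly from the definition of the Clarke directional derivative (Definition~\ref{defn:clarke_derivative}), that every cluster value of \(h\) as \(s \to a\) lies in \(\partial\phi(a)\); indeed \(\limsup_{s\to a} h(s) \le \phi^o(a;1) = \max\partial\phi(a)\), with the analogous lower bound via \(\phi^o(a;-1)\), so that \(H\) is upper semicontinuous with nonempty compact convex (interval) values. Then \(\tilde T(s) := \nabla J_i^*(p - \tau H(s))\) is an upper semicontinuous self-map of \([l_i,u_i]\) with nonempty compact convex values, and Kakutani's fixed-point theorem (elementary in one dimension) yields \(s^*\) with \(s^* \in \tilde T(s^*)\). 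If \(s^* \neq a\), this reads \(p - \tau h(s^*) \in \partial J_i(s^*)\), a genuine solution of the scalar \eqref{eq:bia}; if \(s^* = a\), there is \(v \in \partial\phi(a)\) with \(p - \tau v \in \partial J_i(a)\), which is precisely the admissible update \(x^{k+1}_i = x^k_i\), \(p^{k+1}_i = p - \tau v\) permitted by the \(0/0\) convention. Completing the induction over \(i\) then assembles the full update \((\vec{x}^{k+1},\vec{p}^{k+1})\).

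I expect the main obstacle to be the treatment of the \(0/0\) singularity at \(s = a\): establishing that the difference-quotient envelope \(H\) is upper semicontinuous — in particular that its cluster values are controlled by the Clarke subdifferential \(\partial\phi(a)\) — and verifying that the resulting fixed point corresponds either to a bona fide inclusion or to the admissible no-move update. The remaining ingredients (separability, the self-map property via \(\nabla J_i^*\), and Lipschitzness of \(\phi\) on the box) are comparatively routine.
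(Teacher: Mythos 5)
Your proposal is correct, but it reaches the conclusion by a genuinely different route than the paper. The shared first step is the reduction, via separability of \(J\) (\assref{ass:JV}~d)) and the sequential structure of \eqref{eq:bia}, to a scalar inclusion in each coordinate, with the same dichotomy between a genuine move \(y \neq x\) and the admissible no-move update. From there the paper argues by a direct sign-change (intermediate-value) argument on the inclusion itself: either some \(w \in \partial v(x)\) already yields an admissible stationary update, or a one-sided Clarke derivative is negative, say \(v^o(x;1) < 0\); then \(p - \tau\,(v(y)-v(x))/(y-x)\) lies strictly above \(\partial j(y)\) for \(y\) just above \(x\) (outer semicontinuity of \(\partial j\) plus the definition of the Clarke derivative), and strictly below \(\partial j(y)\) for \(y\) far away, because the difference quotient is bounded below---this is where boundedness below of \(V\), \assref{ass:JV}~a), enters---while \(\partial j\) grows at least linearly by \(\mu\)-convexity; continuity and outer semicontinuity then force a crossing. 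You instead convert the inclusion into the fixed-point equation \(s = \nabla J_i^*(p - \tau h(s))\), exploiting that \((\partial J_i)^{-1} = \nabla J_i^*\) is single-valued and \(1/\mu\)-Lipschitz by strong convexity, regularise the \(0/0\) point by the upper semicontinuous envelope \(H(a) = \partial \phi(a)\), and apply Kakutani on \([l_i,u_i]\). Your envelope step makes explicit the very fact the paper invokes only in passing (cluster values of the difference quotient lie in the Clarke subdifferential of the restriction), your argument needs no case split, and it dispenses with the boundedness-below assumption; your identification of the fixed point \(s^* = a\) with the paper's admissible no-move update is also legitimate, since \(\partial\phi(a)\) is contained in the projected subdifferential \([\partial V(\vec{y}^{k,i-1})]_i\), matching the paper's own scalar formulation. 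The trade-off is that you lean on compactness of \([l_i,u_i]\) to obtain a self-map of a compact convex set: \assref{ass:JV}~c) does not force \(l_i,u_i\) to be finite (the paper's appendix example in \secref{sec:appendix_counter} uses the half-line constraint \(z \geq 0\)), and for an unbounded box your invariant set disappears. The gap is repairable precisely by reinstating \assref{ass:JV}~a): boundedness below of \(V\) makes \(h\) bounded below on \((a,\infty)\), so monotonicity of \(\nabla J_i^*\) supplies a compact invariant interval \([l_i, M]\) on which Kakutani applies; the paper's crossing argument, by contrast, covers the unbounded case as written.
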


\begin{proof}
As \eqref{eq:bia} consists of successive scalar updates, it is sufficient to consider a scalar problem, \(v: \RR \to \RR\), \(j: \RR \to \overline{\RR}\). For \(x \in \RR\) and \(p \in \partial j(x)\) we either want \(y \neq x\) such that
\begin{equation}
\label{eq:scalar_bia}
p - \tau \frac{v(y)-v(x)}{y-x} \in \partial j(y),
\end{equation}
or \(y = x\) and \(p - \tau w \in \partial j(x)\), for some \(w \in \partial v(x)\).

If such a \(w\) exists, we are done. Otherwise, we have \(\min\{v^o(x;1), v^o(x;-1)\} < 0\) and may assume that \(v^o(x;1) < 0\). In this case, we will show that there exists \(y > x\) such that \eqref{eq:scalar_bia} holds.

Since \(p-\tau v^o(x;1) > p\) and \(p \in \partial j(x)\), we deduce that \(p - \tau v^o(x;1) > \partial j(x)\). By the outer semicontinuity of subdifferentials and definition of Clarke directional derivatives, there is \(\delta > 0\) such that
\[
p - \tau \frac{v(y) - v(x)}{y-x} > \partial j(y) \mbox{ for all } y \in (x, x+\delta).
\]
On the other hand, as \(v\) is bounded below,
\[y \mapsto (v(y) - v(x))/(y-x)
\]
is bounded below for all \(y \in [x+\delta, +\infty)\), while by \(\mu\)-convexity of \(j\), we have \(\partial j(y) \geq \partial j(x) + \mu (y-x)\) for all \(y \in [x+\delta,+\infty)\). Hence, there is \(r \gg 0\) such that
\[
p - \tau \frac{v(y) - v(x)}{y-x} < \partial j(y) \mbox{ for all } y \geq x + r.
\]
By continuity of \(v\), and by outer semicontinuity of subdifferentials, it follows that there exists \(y \in (x+\delta, x+r)\) that solves \eqref{eq:scalar_bia}. This concludes the proof.
\end{proof}

\textcolor{black}{Next we give conditions in which the update is unique. While this holds for any time step if the objective function is convex, it may fail for nonconvex functions, as discussed in \citep[Section 5]{rii18smooth}.
\begin{prop}[Uniqueness]
\label{prop:unique}
For \(\tau > 0\), \(\vec{x}^k \in \RR^n\), and \(\vec{p}^k \in \partial J(\vec{x}^k)\), the update \((\vec{x}^{k+1}, \vec{p}^{k+1})\) to \eqref{eq:bia} is unique in the following cases.
\begin{enumerate}[(i)]
	\item \(V\) is convex.
	\item \( V(\cdot) + \eta \|\cdot\|^2\) is convex for \(\eta > 0\), and \(\tau < \mu/\eta\).
\end{enumerate}
\end{prop}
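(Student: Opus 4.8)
The plan is to reduce to the scalar setting and recast the update as a monotone inclusion, then show that under either hypothesis the governing operator is \emph{strictly} increasing, which forces at most one solution; combined with Lemma~\ref{lem:existence} this yields uniqueness. By \assref{ass:JV} d) the inclusion \eqref{eq:bia} decouples into $n$ successive scalar problems, so, exactly as in the proof of Lemma~\ref{lem:existence}, it suffices to treat a scalar locally Lipschitz $v:\RR\to\RR$ and a $\mu$-convex $j:\RR\to\overline{\RR}$ (folding the box term $\chi_{[l_i,u_i]}$ into $j$). Fixing $x$ and $p\in\partial j(x)$, I would introduce the secant-slope set $G(y)=\{(v(y)-v(x))/(y-x)\}$ for $y\neq x$ and $G(x)=\partial v(x)$, the latter encoding the ``$0/0=\partial v(x)$'' convention. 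With this notation, solving \eqref{eq:scalar_bia} together with the stationary case $y=x$ amounts to finding $y$ with
\[
p \in \Phi(y) := \partial j(y) + \tau\, G(y).
\]
Since Lemma~\ref{lem:existence} already supplies a solution, it remains to show $\Phi$ admits at most one such $y$, for which I would prove $\Phi$ is strictly increasing.

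The key step is a monotonicity estimate for $G$. In case (i), convexity of $v$ makes the secant slope $y\mapsto(v(y)-v(x))/(y-x)$ non-decreasing on $\RR\setminus\{x\}$, with one-sided limits at $x$ equal to $v'_-(x)=\min\partial v(x)$ and $v'_+(x)=\max\partial v(x)$; hence $G$ is monotone through $x$. In case (ii), I would set $\tilde v(\cdot)=v(\cdot)+\eta(\cdot)^2$, which is convex, and note that its secant slope $\tilde G$ satisfies $\tilde G(y)=g(y)+\eta(y+x)$ for $y\neq x$, so that $y\mapsto g(y)+\eta y$ coincides with $\tilde G(\cdot)-\eta x$ and is therefore non-decreasing. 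Equivalently, for $y_1<y_2$ and $g_i\in G(y_i)$ one obtains $g_2-g_1\geq-\eta(y_2-y_1)$, with $\eta=0$ recovering case (i).

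Combining this with the $\mu$-strong monotonicity of $\partial j$ (which follows from condition (ii) of Definition~\ref{defn:strongly_convex} by adding the subgradient inequalities at $y_1$ and $y_2$), for $y_1<y_2$, $q_i\in\partial j(y_i)$ and $g_i\in G(y_i)$ the values $s_i=q_i+\tau g_i\in\Phi(y_i)$ satisfy
\[
s_2-s_1 \;\geq\; \mu(y_2-y_1)-\tau\eta(y_2-y_1)\;=\;(\mu-\tau\eta)(y_2-y_1).
\]
In case (i) this equals $\mu(y_2-y_1)>0$; in case (ii) the hypothesis $\tau<\mu/\eta$ makes $\mu-\tau\eta>0$, so again $s_2>s_1$. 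Thus $\Phi$ is strictly increasing, the inclusion $p\in\Phi(y)$ has at most one solution $y$, and this fixes $x^{k+1}_i=y$ uniquely (and, once $y\neq x$, also $p^{k+1}_i=p-\tau g$).

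I expect the main obstacle to be the careful treatment of the singularity at $y=x$: one must check that the secant-slope limits glue continuously onto the subdifferential $\partial v(x)$ so that $G$ is genuinely monotone \emph{through} $x$, which in case (ii) hinges on the identity $\partial v(x)=\partial\tilde v(x)-2\eta x$. This decomposition is valid because $\eta(\cdot)^2$ is $C^1$, making the Clarke sum rule exact, and because $\tilde v$ is convex so its Clarke subdifferential agrees with the convex one; verifying these compatibilities is where the argument needs genuine care, while the reduction to the scalar case and the remaining monotonicity bookkeeping are routine. A minor caveat worth flagging is that in the degenerate stationary case $y=x$ the multiplier $p^{k+1}_i=p-\tau w$ may admit several admissible $w\in\partial v(x)$, so the uniqueness assertion is properly a statement about the iterate $x^{k+1}$.
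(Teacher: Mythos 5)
Your proof is correct and follows essentially the same route as the paper's: reduce to the scalar coordinate problem and observe that strong convexity of \(J\) makes one side strictly increasing while convexity of \(V\) (resp.\ of \(V(\cdot)+\eta\|\cdot\|^2\), with \(\tau\eta<\mu\) absorbing the \(-\tau\eta\) defect) makes the secant term monotone the right way, so the scalar inclusion admits at most one solution; packaging this as strict monotonicity of the single operator \(\Phi=\partial j+\tau G\) rather than as a non-increasing left-hand side meeting a strictly increasing right-hand side is only a cosmetic difference. Your closing caveat about non-uniqueness of \(p^{k+1}_i\) in the stationary case is also consistent with the paper, which records exactly this point in the remark following the proposition.
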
}
\begin{proof}
\textcolor{black}{
\emph{Case (i):} Suppose \(V\) is convex. The existence of a solution to \eqref{eq:bia} is guaranteed by \lemref{lem:existence}. To establish uniqueness, we argue as follows. An update \(\vec{y}^{k,i}\) must satisfy
\begin{equation}
\label{eq:unique1}
p^{k}_i - \tau \frac{V(\vec{y}^{k,i}) - V(\vec{y}^{k,i-1})}{x^{k+1}_i - x^{k}_i} \in [\partial J(\vec{y}^{k,i})]_i.
\end{equation}
The left-hand-side is non-increasing with respect to \(x^{k+1}_i\) due to the difference quotient term of the convex function \(V\), while the right-hand side is strictly increasing due to the strong convexity of \(J\). Hence there cannot be two distinct solutions for \(y^{k,i}_i\) to the scalar equation. This implies uniqueness of the update.}

\textcolor{black}{\emph{Case (ii):} Suppose \(V(\cdot) + \eta \|\cdot\|^2\) is convex and \(\tau < \mu/\eta\). We rewrite \eqref{eq:unique1} as
\begin{equation*}
\frac{p^{k}_i}{\tau} - \frac{V(\vec{y}^{k,i}) - V(\vec{y}^{k,i-1})}{x^{k+1}_i - x^{k}_i} - \eta x^{k+1}_i \in \frac{1}{\tau} [\partial J(\vec{y}^{k,i})]_i - \eta x^{k+1}_i.
\end{equation*}
Since \(\tau \eta < \mu\), the function \(J(\cdot) - \tau\eta \|\cdot\|^2/2\) is strongly convex, and therefore the right-hand side is strictly increasing with respect to \(x^{k+1}_i\). On the other side, as \(V(\cdot) + \eta \|\cdot\|^2\) is convex, the difference quotient term \(\tfrac{V(\vec{y}^{k,i}) - V(\vec{y}^{k,i-1})}{x^{k+1}_i - x^{k}_i} + \eta x^{k+1}_i\) is non-decreasing with respect to \(x^{k+1}_i\).  Therefore, by the same reasoning as in the first case, the update to the Bregman Itoh--Abe method must be unique.}
\end{proof}
\begin{rem}
If the update is stationary, i.e. \(x^{k+1}_i = x^k_i\), then the subgradient update \(p^{k+1}_i\) is unique only up to the choice of subderivative \(v_i \in [\partial V(\vec{y}^{k,i-1})]_i\).

\textcolor{black}{Note that while the convexity criteria in \propref{prop:unique} are global, they are often generalised to local properties such as lower \(C^2\)-smoothness \citep{roc85} and prox-regularity \citep{pol96}. However, an analysis of these properties in the context of Itoh--Abe discrete gradient methods is beyond the theoretical and practical scope of this paper. We also add that lack of uniqueness is not an issue for the implementation or computational tractability of the scheme.}
\end{rem}

\textcolor{black}{The following lemma summarises several useful properties of the iterates \((\vec{x}^k, \vec{p}^k)_{k\in\NN}\), and generalises Lemmas 3.3, 3.4 in \citep{rii18nonsmooth}.}
\begin{lem}[Properties of scheme]
\label{lem:main}
Let \(V\), \(J\), and \(\calC\) satisfy \assref{ass:JV}, and let \((\vec{x}^k,\vec{p}^k)_{k\in\NN}\) be iterates that solve \eqref{eq:bia} for time steps \((\tau_k)_{k\in\NN} \subset [\tau_{\min},\tau_{\max}]\). Then the following properties hold.
\begin{enumerate}[(i)]
	\item \(V(\vec{x}^{k+1}) \leq V(\vec{x}^k)\).
	\item \(\lim_{k\to\infty} \|\vec{x}^{k+1} - \vec{x}^k\| = 0\).
	\item If \(V\) is coercive, then there exists a convergent subsequence of \((\vec{x}^k, \vec{p}^k)_{k\in\NN}\).
	\item The set of limit points \(S\) is compact, connected, and has empty interior. Furthermore, \(V\) is single-valued on \(S\).
\end{enumerate}
\end{lem}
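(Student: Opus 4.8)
The plan is to treat the four claims in order, relying throughout on the dissipation estimate \eqref{eq:bdg_dissipation} and the standing hypotheses of \assref{ass:JV}. For (i), I would invoke \eqref{eq:bdg_dissipation} directly: since $J$ is $\mu$-convex with $\mu>0$ and $\tau_k>0$, the symmetric Bregman term is nonnegative, so $V(\vec{x}^k)-V(\vec{x}^{k+1}) \geq \frac{\mu}{\tau_k}\|\vec{x}^k-\vec{x}^{k+1}\|^2 \geq 0$, which is exactly (i). For (ii), claim (i) shows $(V(\vec{x}^k))_{k}$ is non-increasing, and it is bounded below by \assref{ass:JV} a), hence convergent; summing \eqref{eq:bdg_dissipation} telescopes, and using $\tau_k \leq \tau_{\max}$ gives $\sum_k \|\vec{x}^{k+1}-\vec{x}^k\|^2 \leq \frac{\tau_{\max}}{\mu}\bigl(V(\vec{x}^0)-\lim_k V(\vec{x}^k)\bigr) < \infty$. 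Summability forces $\|\vec{x}^{k+1}-\vec{x}^k\| \to 0$.

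For (iii), coercivity of $V$ combined with (i) confines $(\vec{x}^k)_k$ to the compact sublevel set $\{V \leq V(\vec{x}^0)\}$, so Bolzano--Weierstrass yields a subsequence $\vec{x}^{k_j} \to \vec{x}^*$. It then remains to control $(\vec{p}^k)_k$. Here I would split each $p^k_i$ into its $\partial j_i$-component, which is bounded because $j_i$ is Lipschitz on $[l_i,u_i]$ by \assref{ass:JV} d), and its normal-cone component in $\partial\chi_{[l_i,u_i]}$; the increments $\vec{p}^{k+1}-\vec{p}^k = -\tau_k \overline{\nabla}V(\vec{x}^k,\vec{x}^{k+1})$ are bounded, since the difference quotients defining the Itoh--Abe discrete gradient are controlled by the Lipschitz constant of $V$ on the compact region containing the iterates. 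The delicate point, which I expect to be the main obstacle, is ruling out unbounded drift of the normal-cone component when a coordinate stays pinned at a box boundary: one must argue from the structure of the admissible updates in \eqref{eq:bia} that this component cannot escape to infinity, after which a further subsequence makes $(\vec{x}^{k_j},\vec{p}^{k_j})$ convergent.

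For (iv), boundedness of $(\vec{x}^k)_k$ makes $S$ a nonempty bounded set, and limit sets are always closed, so $S$ is compact. Connectedness I would prove by the standard diameter argument: if $S = S_1 \cup S_2$ with $S_1,S_2$ nonempty, compact and disjoint, then $\dist(S_1,S_2)=d>0$; since the iterates return infinitely often to neighbourhoods of both $S_1$ and $S_2$ while $\|\vec{x}^{k+1}-\vec{x}^k\| \to 0$ by (ii), for large $k$ the sequence cannot jump across the gap and must accumulate strictly between them, yielding a limit point outside $S_1 \cup S_2$ — a contradiction. That $V$ is single-valued on $S$ follows from continuity of $V$ and the convergence of $(V(\vec{x}^k))_k$ to a single value $V^*$: any $\vec{x}^* \in S$ is a subsequential limit, so $V(\vec{x}^*)=V^*$.

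Finally, empty interior follows cleanly once the preceding facts are in hand. If $S$ contained an open ball $B$, then, since points of $B$ are accumulation points, some iterate $\vec{x}^{k_0}$ would lie in $B \subseteq S$, whence $V(\vec{x}^{k_0})=V^*$. Monotonicity and convergence of $(V(\vec{x}^k))_k$ then force $V(\vec{x}^k)=V^*$ for all $k \geq k_0$, and the strict decrease in \eqref{eq:bdg_dissipation} forces $\vec{x}^{k+1}=\vec{x}^k$ for all such $k$; the sequence is therefore eventually constant, so $S$ is a single point, contradicting $B \subseteq S$. This closes (iv).
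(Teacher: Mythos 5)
Your proofs of (i) and (ii) are exactly the paper's: read (i) off the dissipation estimate \eqref{eq:bdg_dissipation}, then telescope it against the lower bound on \(V\) to get square-summability of the increments, hence (ii). For (iv) you give a self-contained argument where the paper only cites \citep[Lemma 3.4]{rii18nonsmooth}: your gap-crossing argument for connectedness and your eventually-constant-sequence argument for empty interior are both sound, granted bounded iterates (a hypothesis that is implicit in the paper's citation as well, and which in your write-up is only secured under the coercivity assumed in (iii)).

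The genuine gap is in (iii), and you have put your finger on it yourself without closing it. You write that ``one must argue from the structure of the admissible updates in \eqref{eq:bia} that this [normal-cone] component cannot escape to infinity'' --- but no such argument exists, because for the scheme \eqref{eq:bia} the claim is false. Take \(n=1\), \(V(x)=(x+1)^2/2\), \(\calC=[0,1]\), \(J(x)=x^2/2+\chi_{[0,1]}(x)\), \(x^0=0\), \(p^0=0\), and any time steps \(\tau_k\in[\tau_{\min},\tau_{\max}]\); every part of Assumption~\ref{ass:JV} holds and \(V\) is smooth, coercive and bounded below. A non-stationary update \(y\in(0,1)\) would have to satisfy \(p^k-\tau_k(1+y/2)=y\), and \(y=1\) would require \(p^k-\tfrac{3}{2}\tau_k\geq 1\); both are impossible once \(p^k\leq 0\). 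So the only admissible update is the stationary one, \(x^{k+1}=0\), \(p^{k+1}=p^k-\tau_k V'(0)=p^k-\tau_k\), whence \(x^k\equiv 0\) and \(p^k\to-\infty\): the pair \((x^k,p^k)\) has no convergent subsequence. This is precisely the drift into the normal cone \(\partial\chi_{[l_i,u_i]}\) that the paper itself identifies as the motivation for the amended scheme \eqref{eq:bia2} and illustrates in the appendix counterexample. Boundedness of \((\vec{p}^k)_{k\in\NN}\) --- and hence (iii) as a statement about the pair --- is only available for \eqref{eq:bia2}, where \(p^{k+1}_i\in\partial j_i(y^{k,i}_i)\) and the Lipschitz bound on \(j_i\) from Assumption~\ref{ass:JV}~d) gives a uniform bound; under \eqref{eq:bia} one can only salvage the statement for the \(x\)-component. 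To be fair, the paper's own treatment of (iii) is a citation to \citep[Lemma 3.3]{rii18nonsmooth}, which concerns the Euclidean case \(\vec{p}^k=\vec{x}^k\) and does not address this point either; but your plan of attack, as stated, cannot be completed.
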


\begin{proof}
Property \emph{(i)} follows from \eqref{eq:bdg_dissipation}.

As \(V\) is bounded below and \((V(\vec{x}^k))_{k \in \NN}\) is decreasing, \(V(\vec{x}^k) \to V^*\) for some limit \(V^*\). Therefore, by \eqref{eq:bdg_dissipation},
\begin{align*}
V(\vec{x}^0) - V^* &= \sum_{k = 0}^\infty V(\vec{x}^k) - V(\vec{x}^{k+1}) \\
&\geq \sum_{k = 0}^\infty \dfrac{\mu}{\tau_k} \|\vec{x}^k - \vec{x}^{k+1}\|^2 \geq \frac{\mu}{\tau_{\max}} \sum_{k = 0}^\infty \|\vec{x}^k - \vec{x}^{k+1}\|^2.
\end{align*}
This implies property \emph{(ii)}.

Properties \emph{(iii)} and \emph{(iv)} follow from \emph{(i)} and \emph{(ii)} and are proven respectively in \citep[Lemma 3.3 and Lemma 3.4]{rii18nonsmooth}.
\end{proof}

\subsection{Amended scheme}

\textcolor{black}{In the next subsection, we prove that accumulation points of the iterates from the Bregman Itoh--Abe method are Clarke stationary. However, we first modify the Bregman Itoh--Abe method to `forget' subgradients induced by the constraints.}

\textcolor{black}{We define the modified scheme as follows. For a starting point \(\vec{x}^0\), \(\vec{p}^0 \in \partial J(\vec{x}^0)\), and \(k \in \NN\), update
\begin{equation}
\label{eq:bia2}
\begin{aligned}
p^{k+1}_i + \tilde{q}  &= p^k_i - \tau_{k,i} \frac{V(\vec{y}^{k,i}) - V(\vec{y}^{k,i-1})}{x^{k+1}_i-x^k_i}, \\
p^{k+1}_i &\in \partial j_i(y^{k,i}_i), \quad \tilde{q} \in \partial \chi_{[l_i,u_i]}(y^{k,i}_i), \\
\vec{y}^{k,i} &= [x^{k+1}_1,\ldots, x^{k+1}_i, x^k_{i+1},\ldots, x^k_n],\\
i &= 1,\ldots,n.
\end{aligned}
\end{equation}
Observe that since \(\vec{0} \in \partial J(\vec{x})\) for all \(\vec{x} \in \calC\), we still have \(\vec{p}^{k+1} \in \partial J(\vec{x}^{k+1})\). It is also straightforward to verify that the previous results and analysis of dissipative structure in this section also hold for \eqref{eq:bia2}.}

\textcolor{black}{The reason for introducing the modified scheme \eqref{eq:bia2} is so that the subgradient iterates \(p^{k}_i\) do not grow arbitrarily large when \(x^{k}_i\) equals \(u_i\) or \(l_i\) for several updates (recall that when the constraint is active, \(\partial \chi_{[l_i,u_i]}\) is unbounded). Otherwise, the iterates might get stuck at a nonstationary point for several iterations, since \(x^k_i\) is unable to leave \(\{l_i, u_i\}\) until the subgradient update in \(\partial \chi_{[l_i,u_i]}\) vanishes, yielding inefficient progress. Furthermore, this leads to pathological, albeit unlikely, examples where the iterates of \eqref{eq:bia} converge to non-stationary accumulation points. For completeness, we give such an example in \secref{sec:appendix_counter}.}

\subsection{Main convergence theorem}

\textcolor{black}{Having introduced a modified Bregman Itoh--Abe scheme in the previous section, we proceed to state and prove the main theorem of this paper, namely that all accumulation points of the scheme \eqref{eq:bia2} are nonstationary. We note that this also holds for the original Bregman Itoh--Abe method if the iterates \((\vec{x}^k)_{k\in\NN}\) converge to a unique limit.}

\begin{thm}[Stationarity guarantees]
\label{thm:main}
Let \(V\), \(J\), and \(\calC\) satisfy \assref{ass:JV}, and suppose the sequence of iterates \((\vec{x}^k,\vec{p}^k)_{k\in\NN}\) solves \eqref{eq:bia2} for time steps \((\tau_k)_{k\in\NN} \subset [\tau_{\min},\tau_{\max}]\). Then all accumulation points \(\vec{x}^* \in S\) are Clarke stationary points restricted to \(\calC\).
\end{thm}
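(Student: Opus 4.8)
The plan is to invoke \assref{ass:JV} b) and reduce the claim to showing that, at every accumulation point $\vec{x}^*$ and for each coordinate $i$, both $V^o(\vec{x}^*;\vec{e}^i) \ge 0$ and $V^o(\vec{x}^*;-\vec{e}^i) \ge 0$. So I fix such an $\vec{x}^*$ with a subsequence $\vec{x}^{k_j} \to \vec{x}^*$ and argue by contradiction: suppose $V^o(\vec{x}^*;\vec{e}^i) = -2c < 0$ for some $i$, treating $\vec{x}^*$ interior to the $i$th box first (the direction $-\vec{e}^i$ being symmetric). The first step records the consequences of \lemref{lem:main}: by part (ii) the intermediate points $\vec{y}^{k,i-1}$ differ from $\vec{x}^k$ by at most $\|\vec{x}^{k+1}-\vec{x}^k\| \to 0$, so $\vec{y}^{k_j,i-1} \to \vec{x}^*$ too; moreover the finite total dissipation underlying part (ii) forces every per-coordinate decrement $V(\vec{y}^{k,i-1}) - V(\vec{y}^{k,i})$ to tend to $0$.

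Next I would turn the negative directional derivative into a uniform local bound. From the definition of the Clarke directional derivative there is $\rho > 0$ with $V^o(\vec{y};\vec{e}^i) \le -c$ for all $\vec{y} \in B_\rho(\vec{x}^*)$; the defining inequality of the Clarke subdifferential then gives $p_i \le -c$ for every $\vec{p} \in \partial V(\vec{y})$ with $\vec{y} \in B_\rho(\vec{x}^*)$. Writing $\Delta_k$ for the difference quotient $\bigl(V(\vec{y}^{k,i}) - V(\vec{y}^{k,i-1})\bigr)/(x^{k+1}_i - x^k_i)$ and applying the mean value theorem for Clarke subgradients (from \citep{cla90}), $\Delta_k$ equals an $i$th subgradient component along the segment from $\vec{y}^{k,i-1}$ to $\vec{y}^{k,i}$, so $\Delta_k \le -c$ whenever that segment lies in $B_\rho(\vec{x}^*)$. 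The crucial structural input is that the amended scheme \eqref{eq:bia2} tracks $p^k_i \in \partial j_i(x^k_i)$ only, so by Lipschitz continuity of $j_i$ on $[l_i,u_i]$ in \assref{ass:JV} d) the tracked subgradients obey $|p^k_i| \le L_i$ uniformly.

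The heart of the argument will be a subgradient-drift estimate. While $\vec{x}^k$ stays in $B_\rho(\vec{x}^*)$ the constraint is inactive ($\tilde q = 0$), and \eqref{eq:bia2} together with $\Delta_k \le -c$ gives $p^{k+1}_i - p^k_i = -\tau_{k,i}\Delta_k \ge \tau_{\min} c$ at every coordinate-$i$ update, regardless of whether the update is stationary or not (in the stationary case the same bound follows from the admissible-update subgradient $v \le -c$). Since $|p^k_i| \le L_i$, at most $N := \lceil 2L_i/(\tau_{\min} c)\rceil$ such updates can occur, so the iterate must exit $B_\rho(\vec{x}^*)$ within $N$ iterations. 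But $\sum_{l=k}^{k+N-1}\|\vec{x}^{l+1}-\vec{x}^l\| \to 0$ as $k \to \infty$ by \lemref{lem:main} (ii) while $N$ is fixed; for $j$ large, the iterate starting at $\vec{x}^{k_j}$ (with $\|\vec{x}^{k_j}-\vec{x}^*\| < \rho/2$) cannot travel the distance $\rho/2$ needed to leave the ball. This contradiction yields $V^o(\vec{x}^*;\vec{e}^i) \ge 0$; the symmetric argument gives $V^o(\vec{x}^*;-\vec{e}^i) \ge 0$, and \assref{ass:JV} b) then closes the proof.

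I expect the main obstacle to be the subgradient bookkeeping at active box constraints, i.e. when $x^*_i \in \{l_i,u_i\}$: there $\partial\chi_{[l_i,u_i]}$ is unbounded, and in the unmodified scheme \eqref{eq:bia} the accumulated constraint subgradient can pin the iterate to the boundary for arbitrarily many iterations, which is precisely the source of the pathological non-stationary limits recorded in \secref{sec:appendix_counter}. The amended scheme \eqref{eq:bia2} is designed to remove this: forgetting the $\partial\chi$ part and retaining only the bounded $\partial j_i$ component keeps $|p^k_i| \le L_i$, so the drift estimate still applies in the feasible direction pointing into the interior and forces the iterate off the boundary within a bounded number of steps. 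The remaining care needed — making the stationary-update case precise through the admissibility condition following \eqref{eq:bia}, and ensuring the segment used in the mean value theorem stays inside $B_\rho(\vec{x}^*)$ — is routine once $\rho$ and $N$ are fixed as above.
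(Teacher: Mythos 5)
Your proof is correct and takes essentially the same route as the paper's: contradiction from a negative coordinate-wise Clarke derivative, localisation to a ball on which the coordinate difference quotients are uniformly negative, the resulting drift $p^{k+1}_i \ge p^k_i + \tau_{\min}c$ of the tracked subgradient, and playing the boundedness of $\partial j_i$ (\assref{ass:JV}~d)) against \lemref{lem:main}~(ii), which traps a window of $N$ consecutive iterates inside the ball. The only differences are cosmetic: you obtain the quotient bound from upper semicontinuity of $V^o$ plus the mean value theorem for Clarke subgradients where the paper uses the $\limsup$ definition of $V^o$ directly, and you neutralise the constraint term $\tilde{q}$ by a sign argument (interior case, then inward directions at the boundary) where the paper instead notes the coordinate monotonicity $x^{k+1}_i \ge x^k_i$ to conclude $\tilde{q}=0$.
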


\begin{proof}
Let \(\vec{x}^* \in S\) and consider a convergent subsequence \((\vec{x}^{k_j})_{j\in\NN}\). We want to show for each basis vector \(\vec{e}^i\) that either \(V^o(\vec{x}^*; \vec{e}^i) \geq 0\) or \(x^*_i = u_i\), and analogously that either \(V^o(\vec{x}^*; -\vec{e}^i) \geq 0 \) or \(x^*_i = l_i\). As the arguments are equivalent, we only prove the first case.

Suppose for contradiction that \(V^o(\vec{x}^*; \vec{e}^i) < -\eta\) for some \(\eta > 0\), and that \(x^*_i < u_i\). By the definition of the Clarke directional derivative, \defnref{defn:clarke_derivative}, there are \(\eps, \delta > 0\) such that for all \(\vec{x} \in B_\eps(\vec{x}^*)\) and \(\lambda \in (0,\delta)\), we have
\begin{equation}
\label{eq:eta_slope}
\frac{V(\vec{x}+\lambda \vec{e}^i) - V(\vec{x})}{\lambda} \leq -\frac{\eta}{2}.
\end{equation}
Since \(\vec{x}^{k_j} \to \vec{x}^*\) and \(\|\vec{x}^{k_j+1} - \vec{x}^{k_j}\| \to 0\), for each \(N \in \NN\) there exists \(K\) such that for all \(j \geq K\), we have \(\vec{x}^k \in B_\eps(\vec{x}^*)\) and \(\|\vec{x}^k - \vec{x}^{k+1}\| < \delta\) for \(k = k_j, k_j+1, \ldots, k_j+N\). By making \(\eps > 0\) sufficiently small, we have \(B_\eps(x^*_i) < u_i\). Furthermore, since \(x^{k+1}_i \geq x^k_i\) for \(k = k_j, \ldots, k_j + N-1\), we deduce that the constraint component \(\tilde{q}\) is zero. By \eqref{eq:eta_slope}, it follows that
\begin{align}
p^{k_j}_i - p^{k_j+N}_i &= \sum_{k = k_j}^{N-1} p^k_i - p^{k+1}_i =  \sum_{k = k_j}^{N-1} \tau^k_i \frac{V(\vec{y}^{k,i}) - V(\vec{y}^{k,i-1}))}{x^{k+1}_i - x^{k}_i} \nonumber \\
&\leq - \tau_{\min}  \sum_{k = k_j}^{N-1} \frac{\eta}{2} = - N \tau_{\min} \frac{\eta}{2}. 
\label{eq:p_inc}
\end{align}
By \assref{ass:JV}, \(\partial j_i\) is bounded on \(U = B_\eps(\vec{x}^*) \cap [l_i, u_i]\). Since \(p^{k,j}_i, \ldots, p^{k_j+N}_i \in \partial j_i(U)\), we can choose \(N\) such that \(N \tau_{\min} \frac{\eta}{2} > \max \partial j_i(U) - \min \partial j_i(U)\) and arrive at a contradiction. Thus, \(\vec{x}^*\) is a Clarke stationary point restricted to \(\calC\).
\end{proof}

\section{Examples of Bregman Itoh--Abe discrete gradient schemes}

\label{sec:examples}

In this section, we describe several schemes based on the Bregman Itoh--Abe discrete gradient scheme \eqref{eq:bia}. We will primarily consider objective functions of the form
\begin{equation}
\label{eq:linear}
V(\vec{x}) = \frac{1}{2} \inner{\vec{x},A\vec{x}} - \inner{\vec{b},\vec{x}},
\end{equation}
where \(A \in \RR^{n \times n}\) is a symmetric, positive semi-definite matrix, \textcolor{black}{with strictly positive entries \(a^i_i > 0\) on the diagonal.}

We are particularly interested in problems with underlying sparsity and/or constraints, with applications in image analysis. Throughout this section, we use a time step vector \(\vec{\tau}^k\) coordinate-wise scaled by the diagonal of \(A\), i.e. \(\vec{\tau}^k = \tau/\mbox{diag}\,(A) = [\tau/ a^1_1, \ldots, \tau /a^n_n]\) for all \(k \in \NN\), and some \(\tau > 0\).

We first introduce some well-known coordinate descent schemes for solving linear systems, which Miyatake et al. \citep{miy18} showed were equivalent to the Itoh--Abe discrete gradient method. The successive-over-relaxation (SOR) method \citep{you71} updates each coordinate sequentially according to the rule
\begin{equation}
\label{eq:SOR}
\begin{aligned}
\vec{y}^{k,0} &= \vec{x}^k \\
\vec{y}^{k,i} &= \vec{y}^{k,i-1} - \frac{\omega}{a^i_i} (\inner{\vec{a}^i, \vec{y}^{k,i-1}} - b_i) \vec{e}^i, \\
\vec{x}^{k+1} &= \vec{y}^{k,n},
\end{aligned}
\end{equation}
where \(\omega \in (0,2)\). For \(\omega = 1\), this is the Gauss-Seidel method \citep{you71}. The SOR method is equivalent to the Itoh--Abe discrete gradient method
\[
\vec{x}^{k+1} = \vec{x}^k - \vec{\tau} \overline{\nabla} V(\vec{x}^k, \vec{x}^{k+1}),
\]
with \(V\) given by \eqref{eq:linear} with the time steps \(\tau_i = 2\omega/\del{(2-\omega) a^i_i}\).

\subsection{Sparse SOR method}
\label{sec:sparseSOR}

We consider underdetermined linear systems and want to find sparse solutions \(\vec{x}^*\). Hence we seek to apply the Bregman Itoh--Abe method \eqref{eq:bia} with objective function \(V\) given by \eqref{eq:linear}, and
\begin{equation}
\label{eq:J_l1}
J(\vec{x}) = \frac{1}{2}\|\vec{x}\|^2 + \gamma \|\vec{x}\|_1,
\end{equation}
for \(\gamma  >0\). We term this the \emph{Bregman SOR (BSOR) method}.

By \propref{prop:unique}, the updates of this method are well-defined and unique. One can verify that the updates are given as follows. Denote by \(\tilde{x}^{k+1}_i\) the  standard SOR coordinate update from \(x^{k}_i\), \eqref{eq:SOR}. Furthermore, for \(\vec{p}^k \in \partial J(\vec{x}^k)\), we write \(\vec{p}^k = \vec{x}^k + \gamma \vec{r}^k\), where \(\vec{r}^k \in \partial \|\vec{x}^k\|_1\). Then \((x^{k+1}_i,r^{k+1}_i)\) are given in closed form as
\begin{equation}
\begin{aligned}
x^{k+1}_i &= S \del{\tilde{x}^{k+1}_i + \frac{2 \gamma}{2 + \tau} r^k_i,  \frac{2 \gamma}{2+\tau} },  \\
r^{k+1}_i &= r^{k}_i + \frac{\tau}{\gamma a^i_i} \del{b_i - \inner{\vec{a}^i, \vec{x}^k} - \frac{a^i_i(2+\tau)}{2\tau}  (y_i - x_i) }.
\end{aligned}
\end{equation}
Here \(S: \RR \times \RR \to \RR\) denotes the shrinkage operator \(S(\vec{x},\lambda) = \sgn(\vec{x})\max\{|\vec{x}|-\lambda, 0\}\), applied elementwise to a vector \(\vec{x}\).

\subsection{Sparse, regularised SOR}

If \(\vec{b} = A\vec{x}^{\text{true}} + \vec{\delta}\), where \(\vec{x}^{\text{true}}\) is the sparse ground truth and \(\vec{\delta}\) is noise, then it may be necessary to regularise the objective function as well. Hence we consider the objective function
\begin{equation}
\label{eq:V_L1}
V(\vec{x}) = \frac{1}{2} \inner{\vec{x},A\vec{x}} - \inner{\vec{b},\vec{x}} + \lambda \|\vec{x}\|_1,
\end{equation}
for some regularisation parameter \(\lambda > 0\). The non-smoothness induced by \(\|\cdot\|_1\) satisfies \assref{ass:JV}, so \thmref{thm:main} implies Bregman Itoh--Abe discrete gradient methods will still converge to stationary points of this problem.

For both \(J(\vec{x}) = \frac{1}{2}\|\vec{x}\|^2 + \gamma \|\vec{x}\|_1\) and \(J(\vec{x}) = \frac{1}{2} \|\vec{x}\|^2\), the scheme \eqref{eq:bia} can be expressed in closed	 form for \eqref{eq:V_L1}, on a case-by-case basis. However, for purposes of brevity, we leave include this in the appendix.

\section{Equivalence of iterative methods for linear systems}

In what follows, we discuss and demonstrate equivalencies for different iterative methods for solving linear systems. We recall from the previous section that  the SOR method \eqref{eq:SOR} is equivalent to the Itoh--Abe discrete gradient method \citep{miy18}.

The explicit coordinate descent method \citep{bec13, wri15} for minimising \(V\) is given by
\begin{equation}
\label{eq:coordinate_descent}
\begin{aligned}
\vec{y}^{k,0} &= \vec{x}^k \\
\vec{y}^{k,i} &= \vec{y}^{k,i-1} - \alpha_i [\nabla V(\vec{y}^{k,i-1})]_i \vec{e}^i, \\
\vec{x}^{k+1} &= \vec{y}^{k,n},
\end{aligned}
\end{equation}
where \(\alpha_i > 0\) is the time step. As mentioned in \citep{wri15}, the SOR method is also equivalent to the coordinate descent method with \(V\) given by \eqref{eq:linear} and the time step \(\alpha_i = \omega/a^i_i\). Hence, in this setting, the Itoh--Abe discrete gradient method is equivalent not only to SOR methods, but to explicit coordinate descent.

It is not surprising that these iterative coordinate methods turn out to be the same, given that the gradient \(V\) in \eqref{eq:linear} is linear. Furthermore, these equivalencies extend to discretisations of the inverse scale space flow with \(J\) given by \eqref{eq:J_l1}. The resultant Bregman Itoh--Abe scheme for \eqref{eq:linear} is described in \secref{sec:sparseSOR}. We may compare this to a \emph{Bregman linearised coordinate descent} scheme,
\begin{equation*}
\begin{aligned}
\vec{y}^{k,0} &= \vec{x}^k, \quad \vec{p}^k \in \partial J(\vec{x}^k) \\
z_i &= \argmin_{y} [\nabla V(\vec{y}^{k,i-1})]_i \cdot y + \frac{a^i_i}{\alpha_i} D^{\vec{p}^k}_J(\vec{y}^{k,i-1},\vec{y}^{k,i-1}+y\vec{e}^i), \\
\vec{y}^{k,i} &= \vec{y}^{k,i-1} + z_i \vec{e}^i, \\
\vec{x}^{k+1} &= \vec{y}^{k,n}.
\end{aligned}
\end{equation*}
One can verify that this scheme is equivalent to \eqref{eq:coordinate_descent} for the parameters
\[
\tau_i = \frac{2\alpha}{(2-\alpha) a^i_i}, \quad \lambda^* = \frac{\lambda}{1 + \frac{\alpha}{2-\alpha}}.
\]

\section{Numerical examples}
\label{sec:numerics}

In this section, we present numerical results for the schemes described in \secref{sec:examples}.

\subsection{Sparse SOR}

We construct a matrix \(A \in \RR^{1024 \times 1024}\) from independent standard (zero mean, unit variance) Gaussian draws, and construct the sparse ground truth \(\vec{x}^{\text{true}}\) by choosing \(10\%\) of the indices at random determined by uniform draws on the unit interval. We then solve the problem \[
\argmin_{\vec{x}} \frac{1}{2}\|A\vec{x} - \vec{b}\|^2,
\]
where \(\vec{b} = A\vec{x}^{\text{true}}\). We compare the SOR method (\(J(\vec{x}) = \|\vec{x}\|^2/2\)) and the BSOR method (\(J(\vec{x}) = \|\vec{x}\|^2/2 + \gamma \|\vec{x}\|_1\)), where \(\gamma = 1\). We set time steps to \(\vec{\tau} = 2/\mbox{diag}\,(A)\), corresponding to the Gauss-Seidel method. See \figref{fig:gaussian_noiseless} for the results.

\begin{figure}
\begin{subfigure}{0.98\columnwidth}
\begin{center}
\includegraphics[width=0.8\columnwidth]{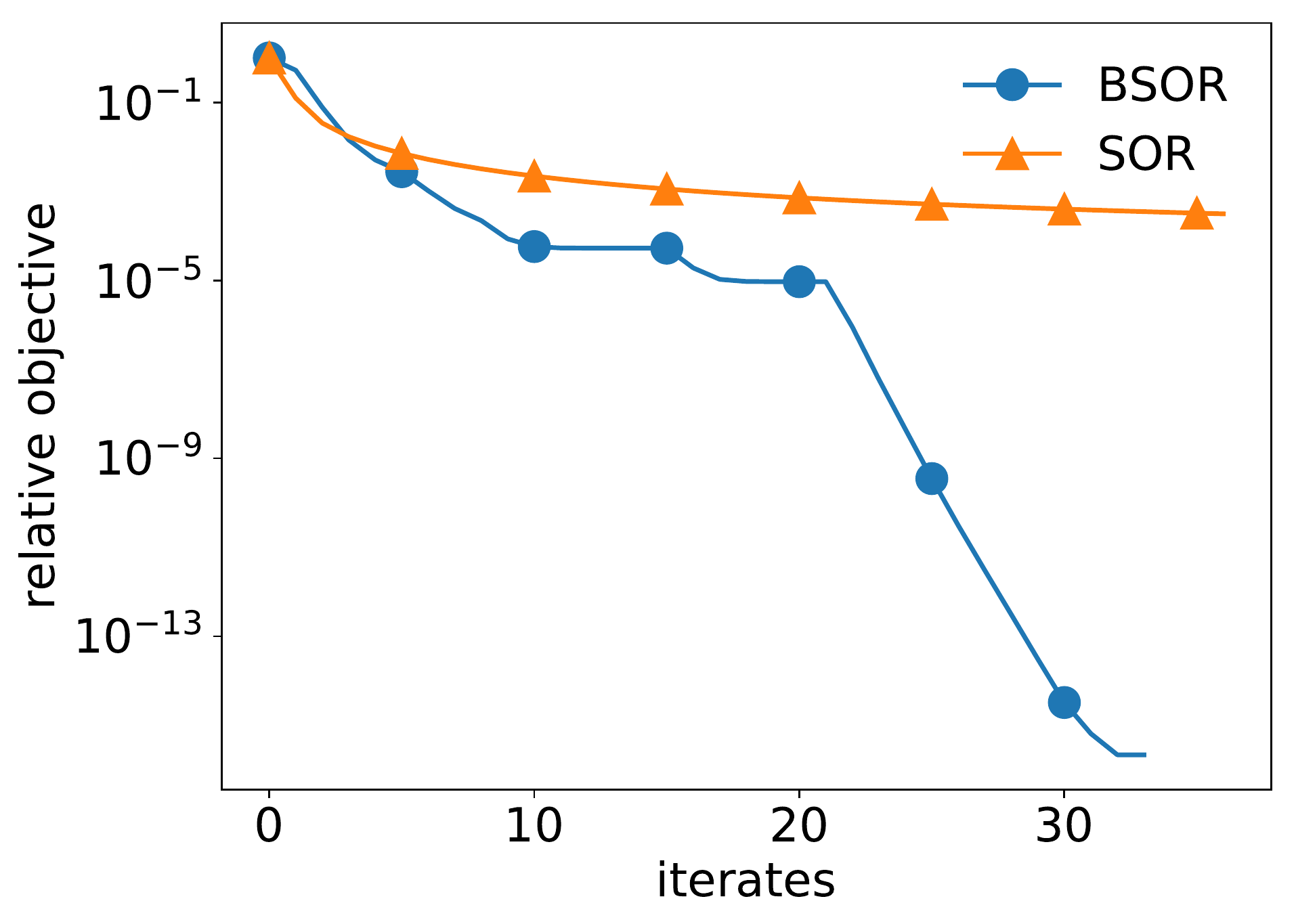}
\end{center}
\end{subfigure}
\begin{subfigure}{0.98\columnwidth}
\begin{center}
\includegraphics[width=0.8\columnwidth]{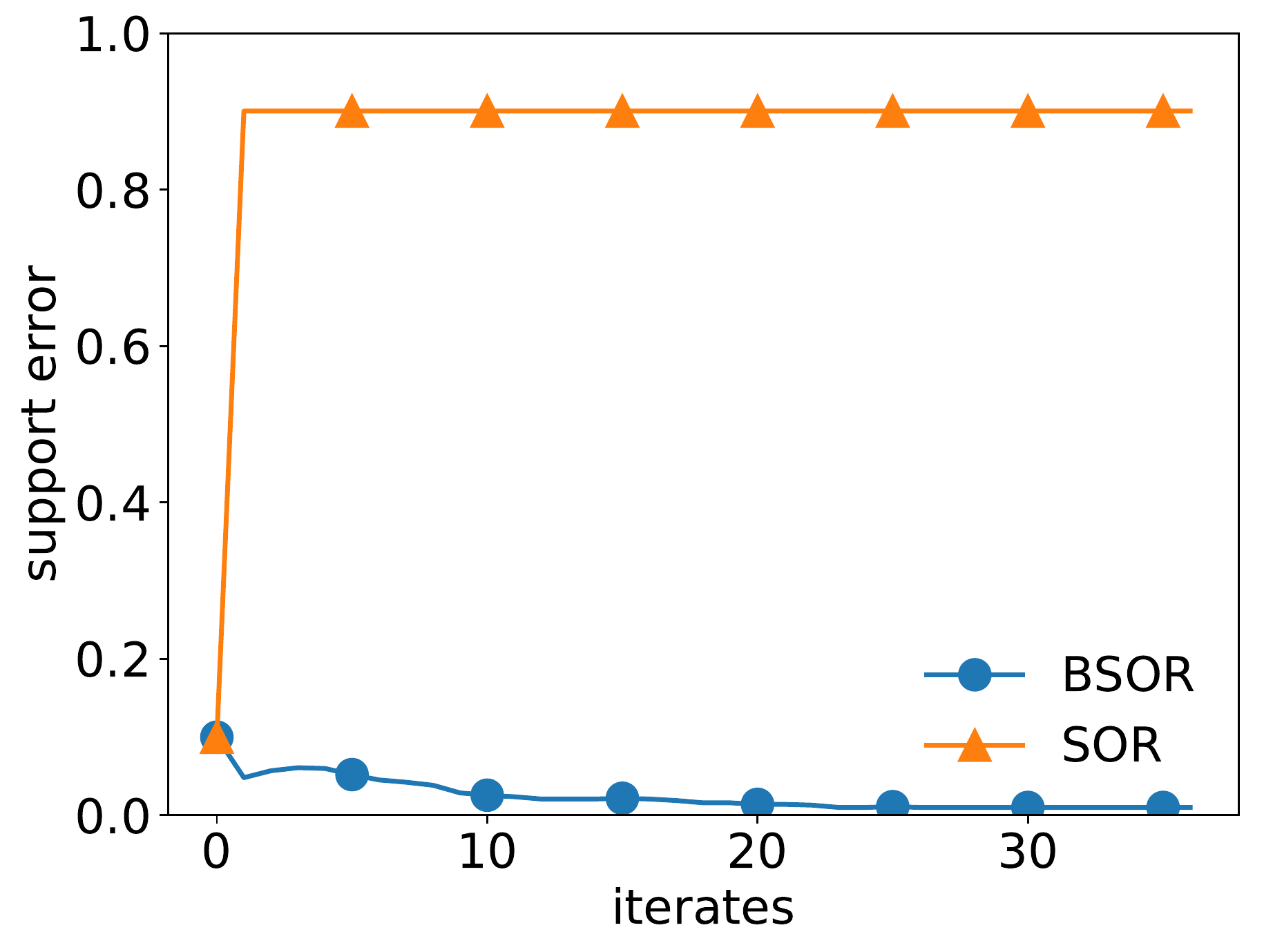}
\end{center}
\end{subfigure}
\caption{Comparison of SOR and sparse SOR methods, for Gaussian linear system without noise. Top: Convergence rate for relative objective, i.e. \([V(\vec{x}^k) - V^*]/[V(\vec{x}^0)-V^*]\). Bottom: Support error with respect to iterates, i.e. proportion of indices \(i\) s.t. \(\sgn(x^k_i)=\sgn(x^*_i)\).}
\label{fig:gaussian_noiseless}
\end{figure}

For the same test problem, but where the ground truth is binary, i.e. only takes values \(1\) or 0, see \figref{fig:gaussian_noiseless_alt}.
\begin{figure}
\begin{subfigure}{0.98\columnwidth}
\begin{center}
\includegraphics[width=0.8\columnwidth]{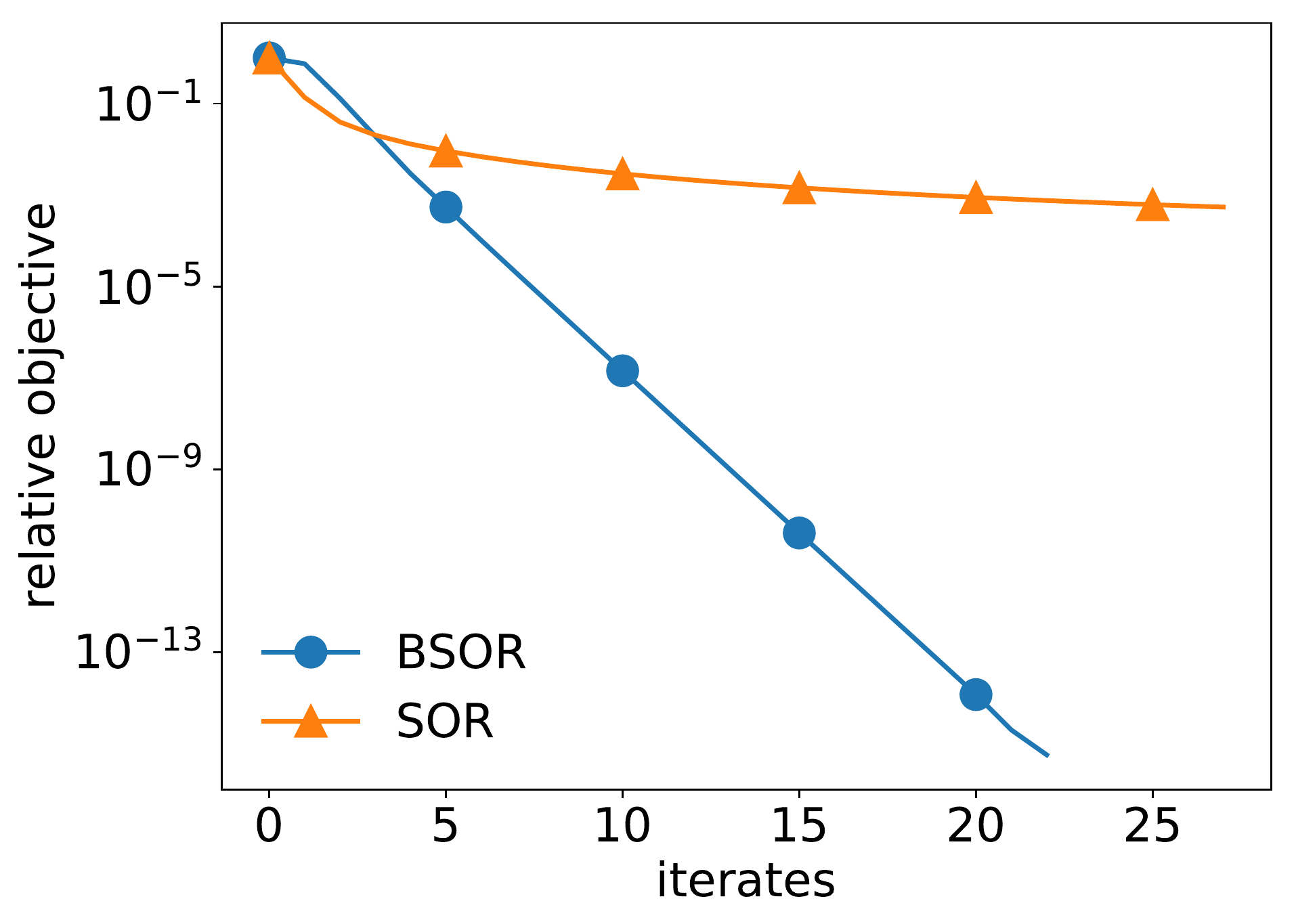}
\end{center}
\end{subfigure}
\begin{subfigure}{0.98\columnwidth}
\begin{center}
\includegraphics[width=0.8\columnwidth]{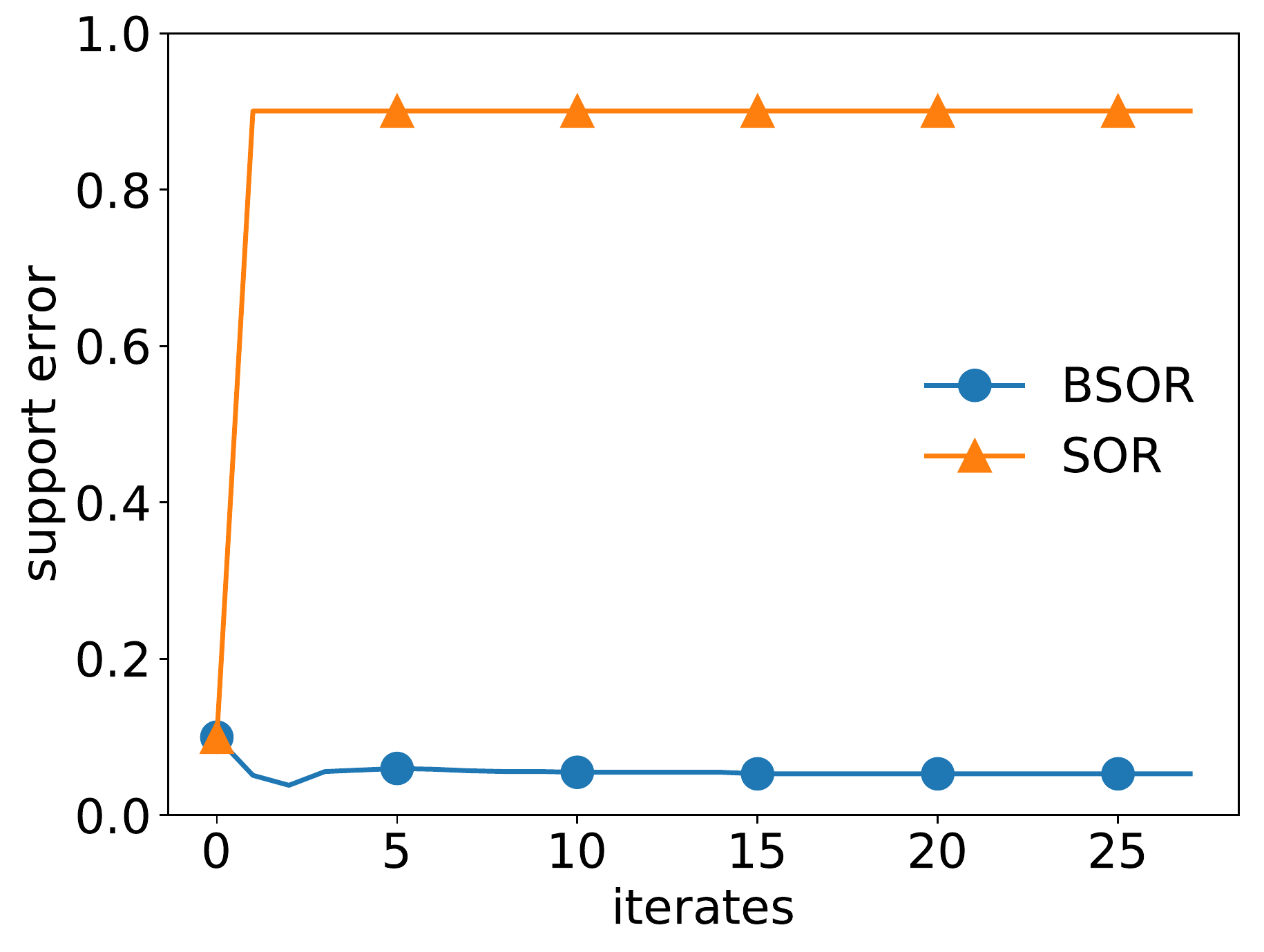}
\end{center}
\end{subfigure}
\caption{Comparison of SOR and sparse SOR methods, for Gaussian linear system without noise, and binary ground truth. Top: Convergence rate for relative objective. Bottom: Support error with respect to iterates.}
\label{fig:gaussian_noiseless_alt}
\end{figure}

\subsection{Sparse, regularised SOR}

We construct \(A \in \RR^{1024 \times 1024}\) and \(\vec{x}^{\text{true}}\) as in the previous subsection. However, we add noise to the data, i.e. \(\vec{b} = A\vec{x}^{\text{true}} + \vec{\delta}\), where \(\vec{\delta}\) is independent Gaussian noise with a standard deviation of \(0.1 \|A\vec{x}^{\text{true}}\|_\infty\). Since the added noise destroys the sparsity structure of \(A^{-1}\vec{b}\), the sparse SOR method fails to improve the convergence rate. The results for \(V(\vec{x}) = \|A\vec{x}-\vec{b}\|^2/2\) are in \figref{fig:gaussian_noisy}. 

We therefore include regularisation in the objective function of the form
\[
V(\vec{x}) = \frac{1}{2} \|A\vec{x}-\vec{b}\|^2/2 + \lambda \|\vec{x}\|_1,
\]
where \(\lambda = 100\), and with initialisation \(\vec{x}^0\) constructed by random, independent Gaussian draws. The results are visualised in \figref{fig:gaussian_noisy_l1}.

\begin{figure}
\begin{subfigure}{0.98\columnwidth}
\begin{center}
\includegraphics[width=0.8\columnwidth]{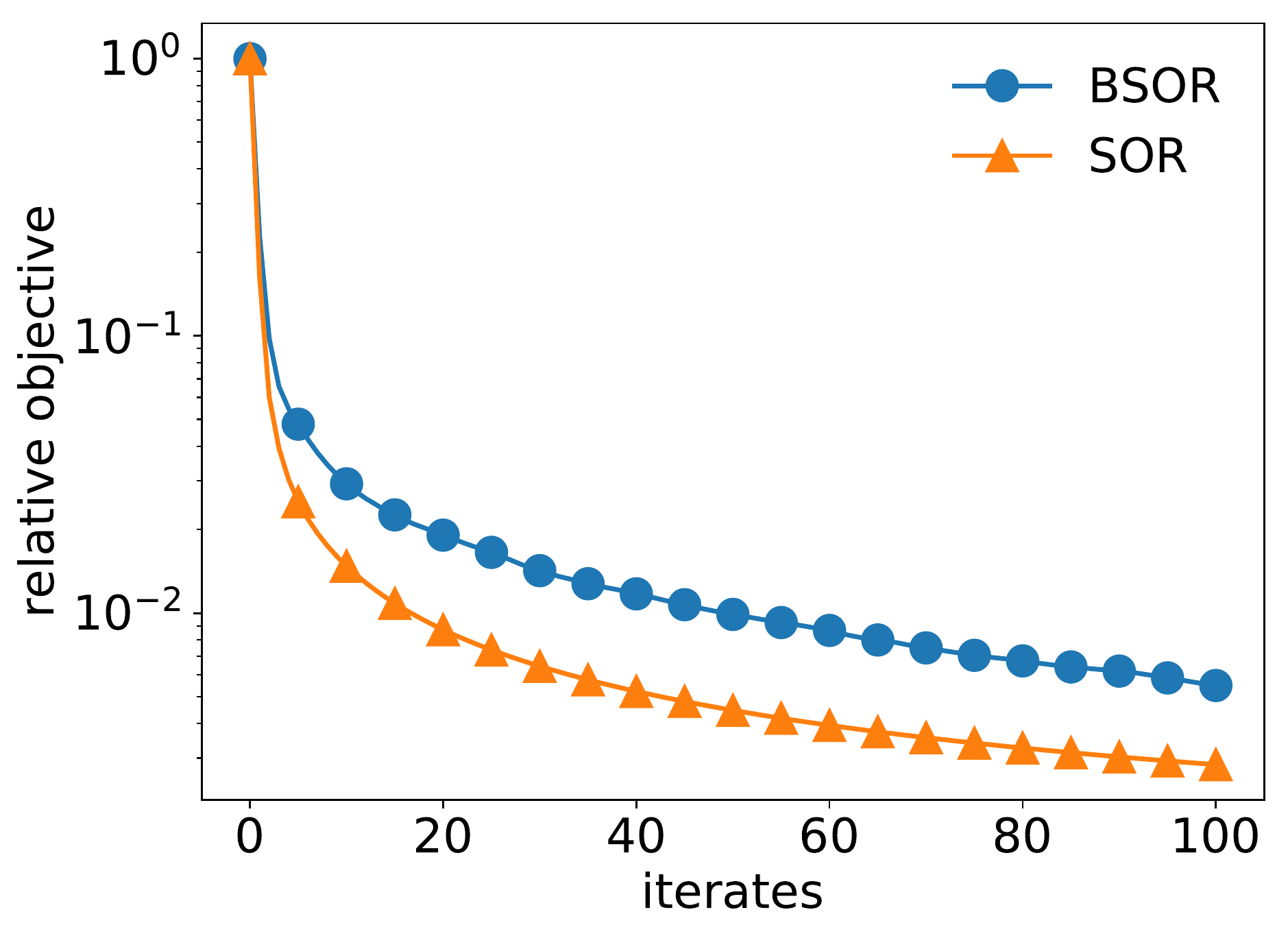}
\end{center}
\end{subfigure}
\begin{subfigure}{0.98\columnwidth}
\begin{center}
\includegraphics[width=0.8\columnwidth]{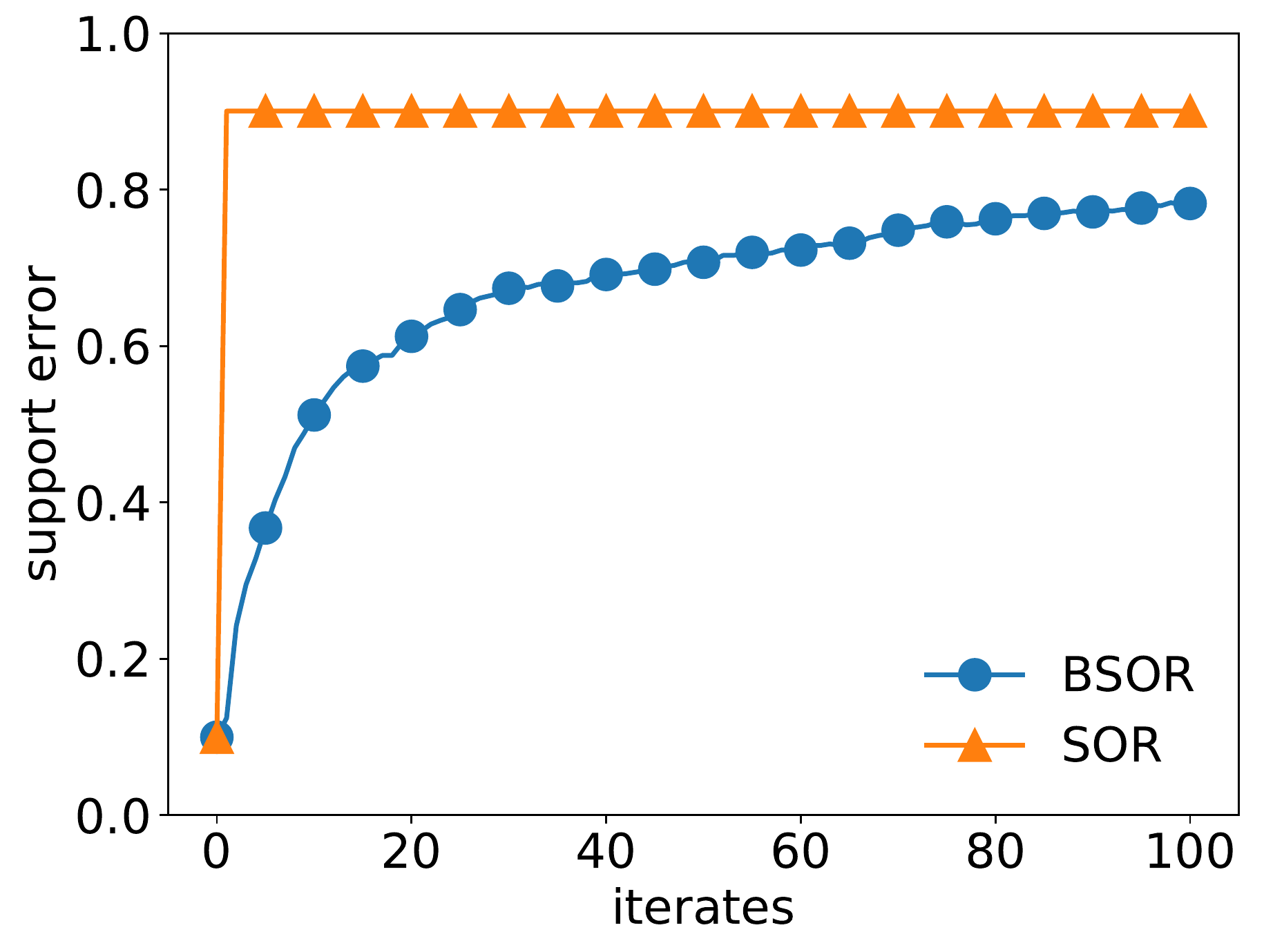}
\end{center}
\end{subfigure}
\caption{Comparison of SOR and sparse SOR methods, for Gaussian linear system with noise. Top: Convergence rate for relative objective. Bottom: Support error with respect to iterates.}
\label{fig:gaussian_noisy}
\end{figure}

\begin{figure}
\begin{subfigure}{0.98\columnwidth}
\begin{center}
\includegraphics[width=0.8\columnwidth]{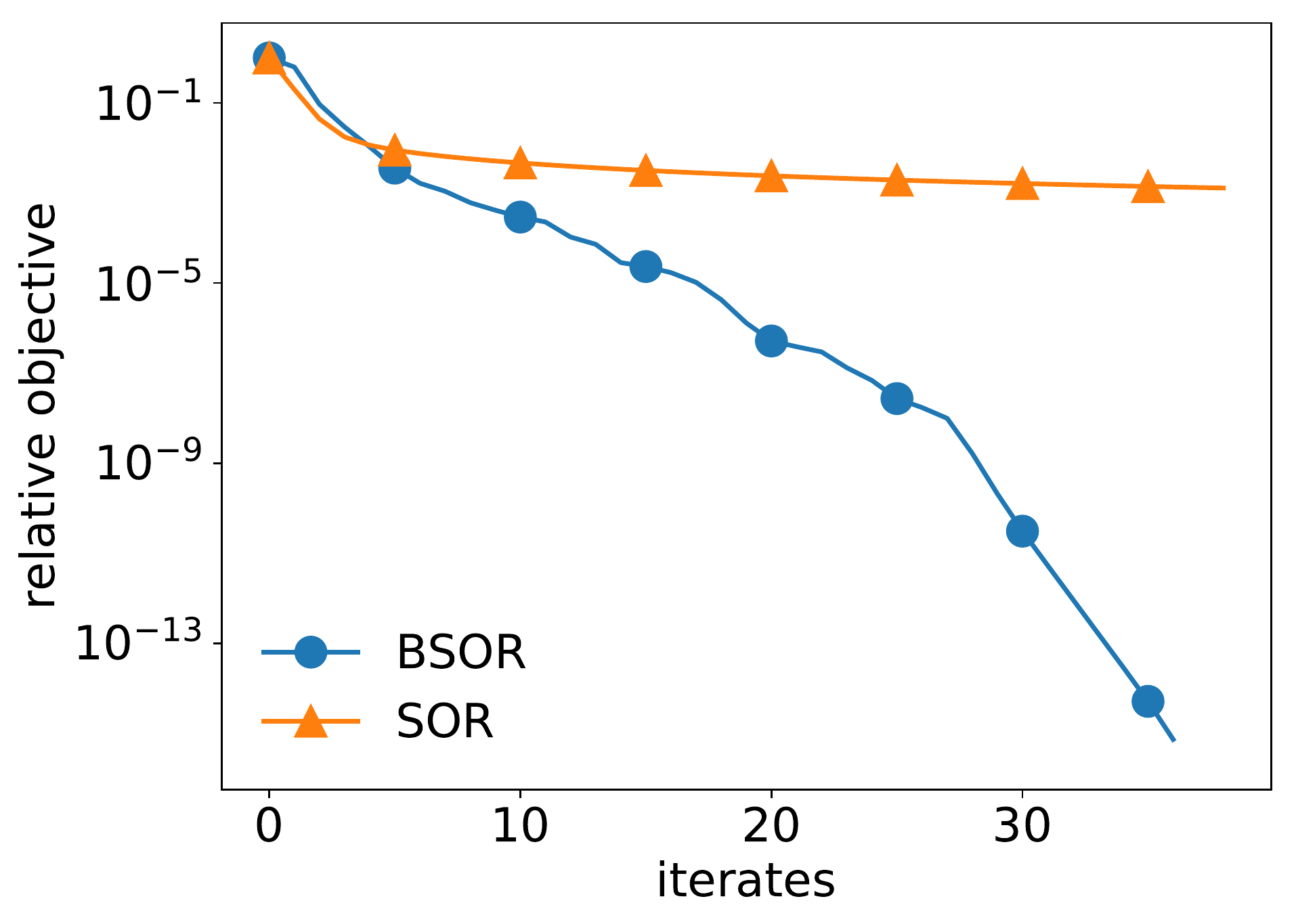}
\end{center}
\end{subfigure}
\begin{subfigure}{0.98\columnwidth}
\begin{center}
\includegraphics[width=0.8\columnwidth]{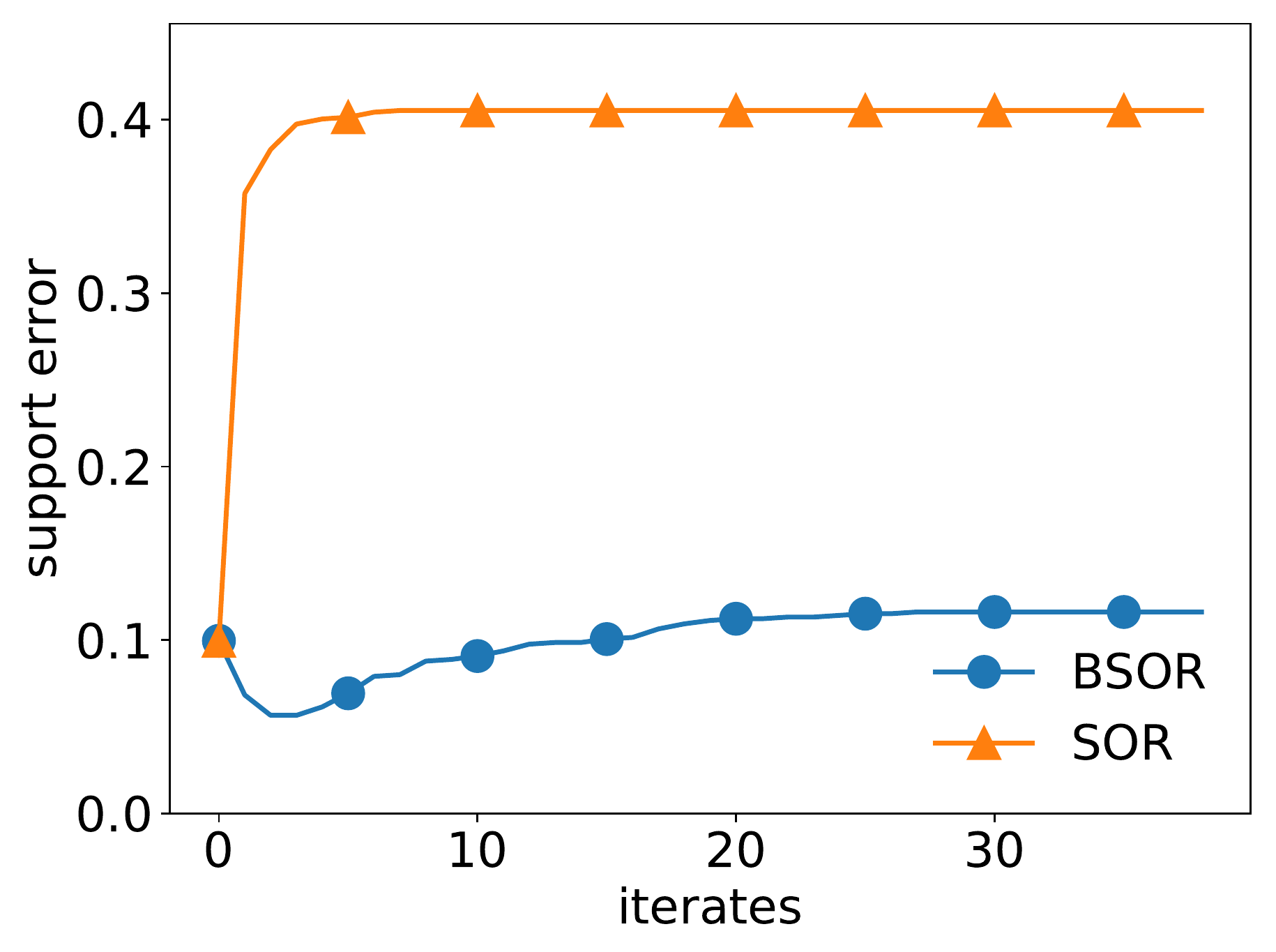}
\end{center}
\end{subfigure}
\caption{Comparison of SOR and sparse SOR methods, for  \(\ell^1\)-regularised linear system with noise. Top: Convergence rate for relative objective. Bottom: Support error with respect to iterates.}
\label{fig:gaussian_noisy_l1}
\end{figure}

\subsection{Student-t regularised image denoising}

\textcolor{black}{We consider a nonconvex image denoising model, previously presented in \citep{och14}, given by
\begin{equation}
\label{eq:nonconvex_problem}
F: \RR^n \to \RR, \quad F(\vec{x}) := \sum_{i=1}^N \varphi_i \Phi(K_i \vec{x}) + \|\vec{x} - \vec{x}^\delta\|_1.
\end{equation}
Here \(\{K_i\}_{i=1}^N\) is a collection of linear filters, \((\varphi_i)_{i=1}^N \subset [0,\infty)\) are coefficients, \(\Phi: \RR^n \to \RR\) is the nonconvex function based on the t-student distribution, defined as
\[
\Phi(x) := \sum_{j=1}^n \psi(x_i), \quad \psi(x) := \log(1+x^2),
\]
and \(x^\delta\) is an image corrupted by impulse noise (salt \& pepper noise).}

\textcolor{black}{As impulse noise only affects a small subset of pixels, we use the data fidelity term \(\vec{x} \mapsto \|\vec{x} - \vec{x}^\delta\|_1\) to promote sparsity of \(\vec{x}^*-\vec{x}^\delta\) for \(\vec{x}^* \in \argmin F(\vec{x})\). As linear filters, we consider the simple case of finite difference approximations to first-order derivatives of \(\vec{x}\). We note that by applying a gradient flow to this regularisation function, we observe a similarity to Perona-Malik diffusion \citep{per90}.}

\textcolor{black}{We consider a Bregman Itoh--Abe method (abbreviated to BIA) for solving \(\min_{\vec{x} \in \RR^n} F(\vec{x})\) with the Bregman function
\[
J(\vec{x}) := \frac{1}{2}\|\vec{x}\|^2 + \gamma \|\vec{x} - \vec{x}^\delta\|_1,
\]
to account for the sparsity of the residual \(\vec{x}^* - \vec{x}^\delta\), and compare the method to the regular Itoh--Abe discrete gradient method (abbreviated to IA).}

\textcolor{black}{We set the starting point \(\vec{x}^0 = \vec{x}^\delta\), and the parameters to \(\tau_k=1\) for all \(k\), \(\gamma = 0.5\), and \(\varphi_i = 2\), \(i =1,2\).  For the impulse noise, we use a noise density of \(10 \%\). In the case where \(x^{k+1}_i\) is not set to \(x^\delta_i\), we use the scalar root solver \textit{scipy.optimize.brenth} on Python. Otherwise, the updates are in closed form.}

\textcolor{black}{See \figref{fig:mrf} for numerical results. By gradient norm, we mean \(\dist(\partial F(\vec{x}^k),0)\). We observe that as in the convex case, the Bregman Itoh--Abe method converges significantly faster, due to the sparse structure of the problem.}

\begin{figure}
\begin{subfigure}{0.98\columnwidth}
\begin{center}
\includegraphics[width=0.8\columnwidth]{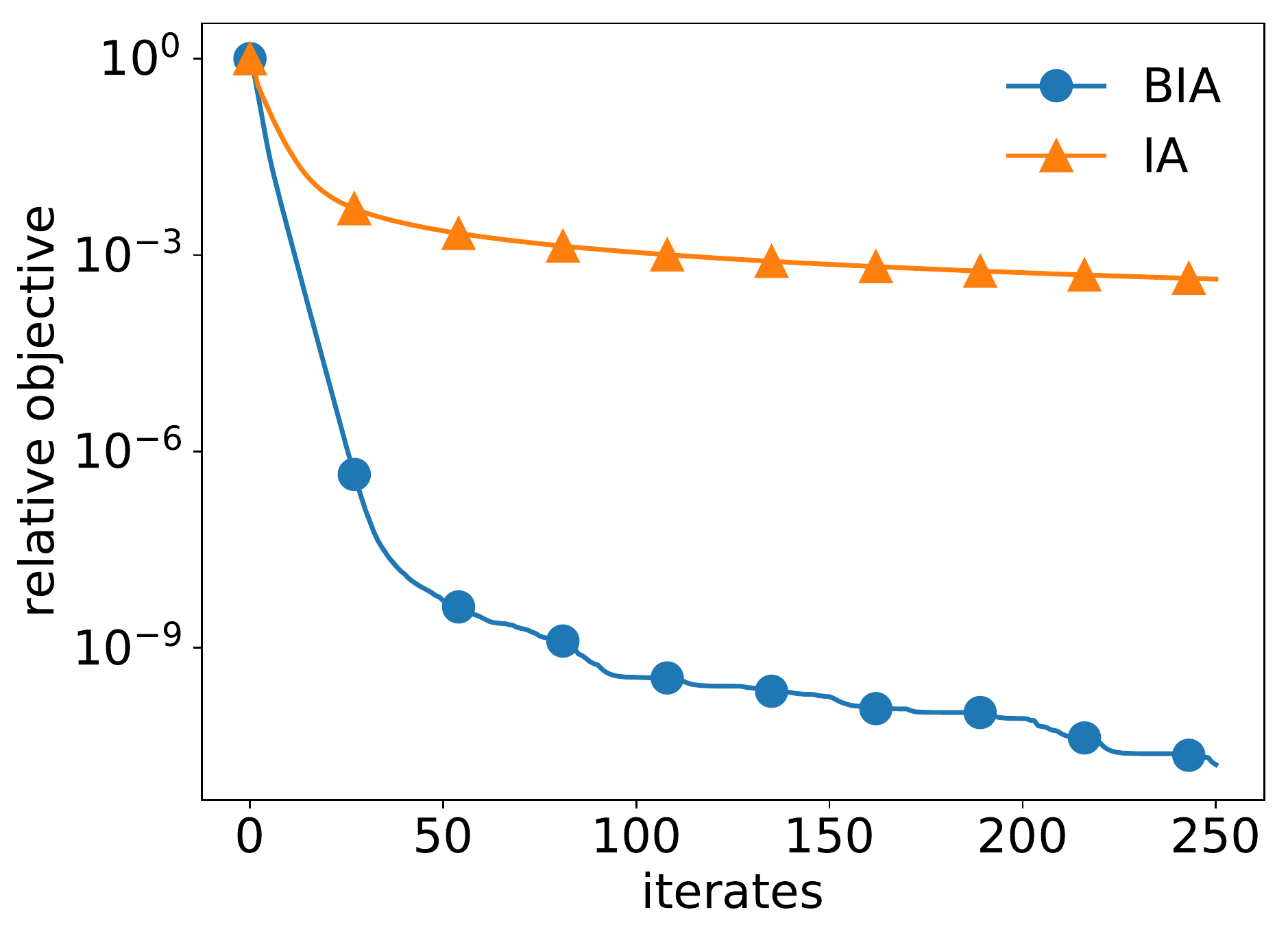}
\end{center}
\end{subfigure}
\begin{subfigure}{0.98\columnwidth}
\begin{center}
\includegraphics[width=0.8\columnwidth]{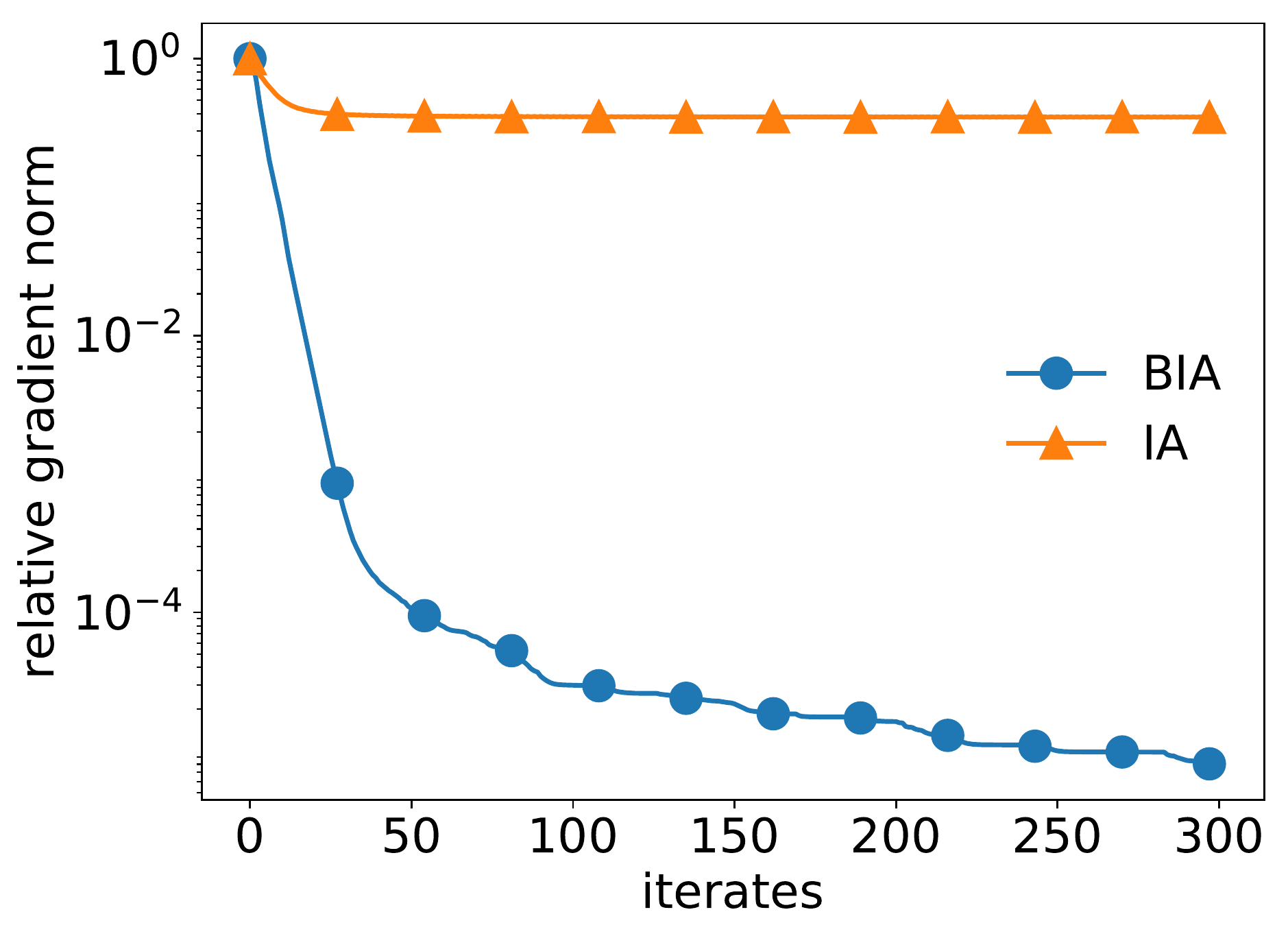}
\end{center}
\end{subfigure}
\begin{subfigure}{0.98\columnwidth}
\begin{center}
\includegraphics[width=0.8\columnwidth]{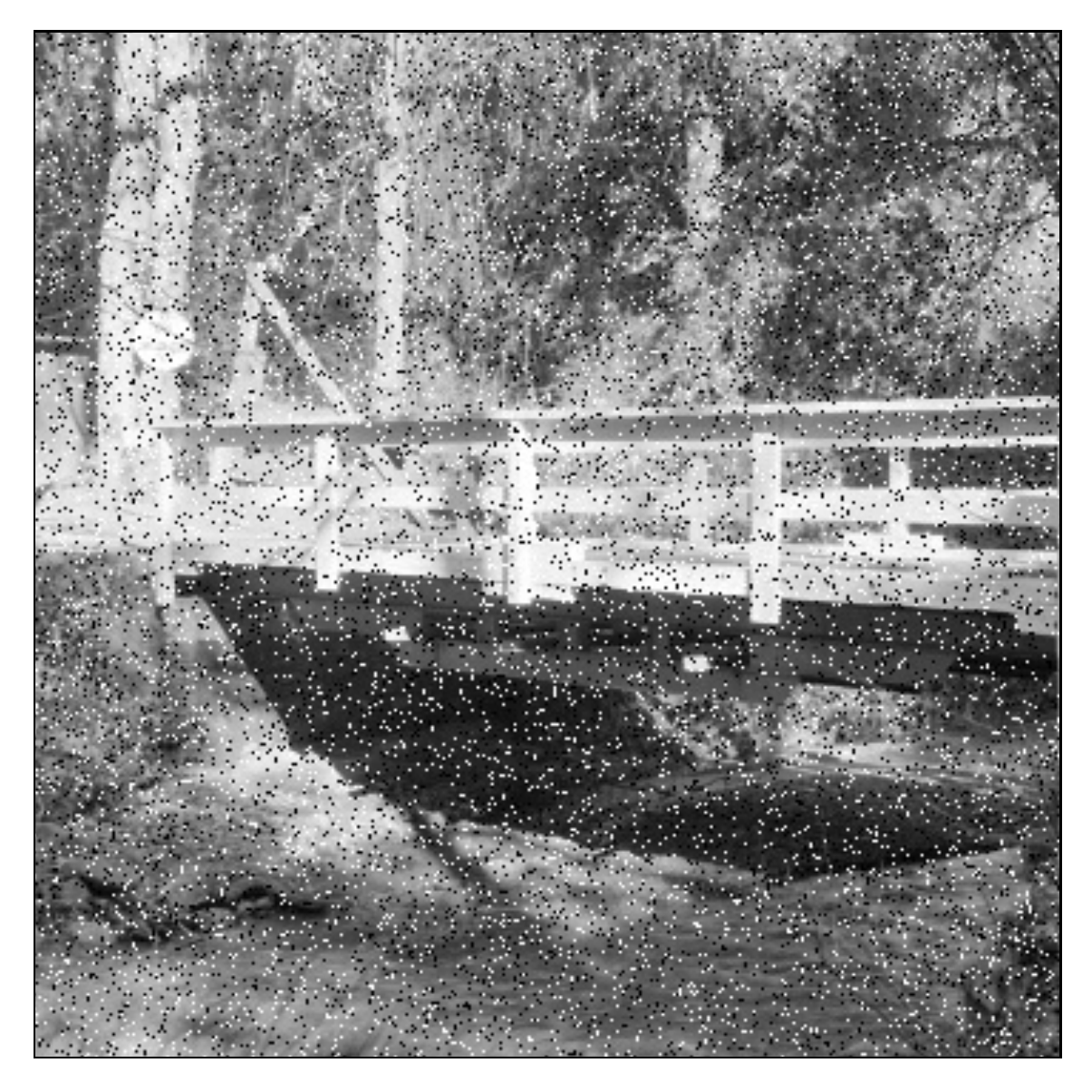}
\end{center}
\end{subfigure}
\begin{subfigure}{0.98\columnwidth}
\begin{center}
\includegraphics[width=0.8\columnwidth]{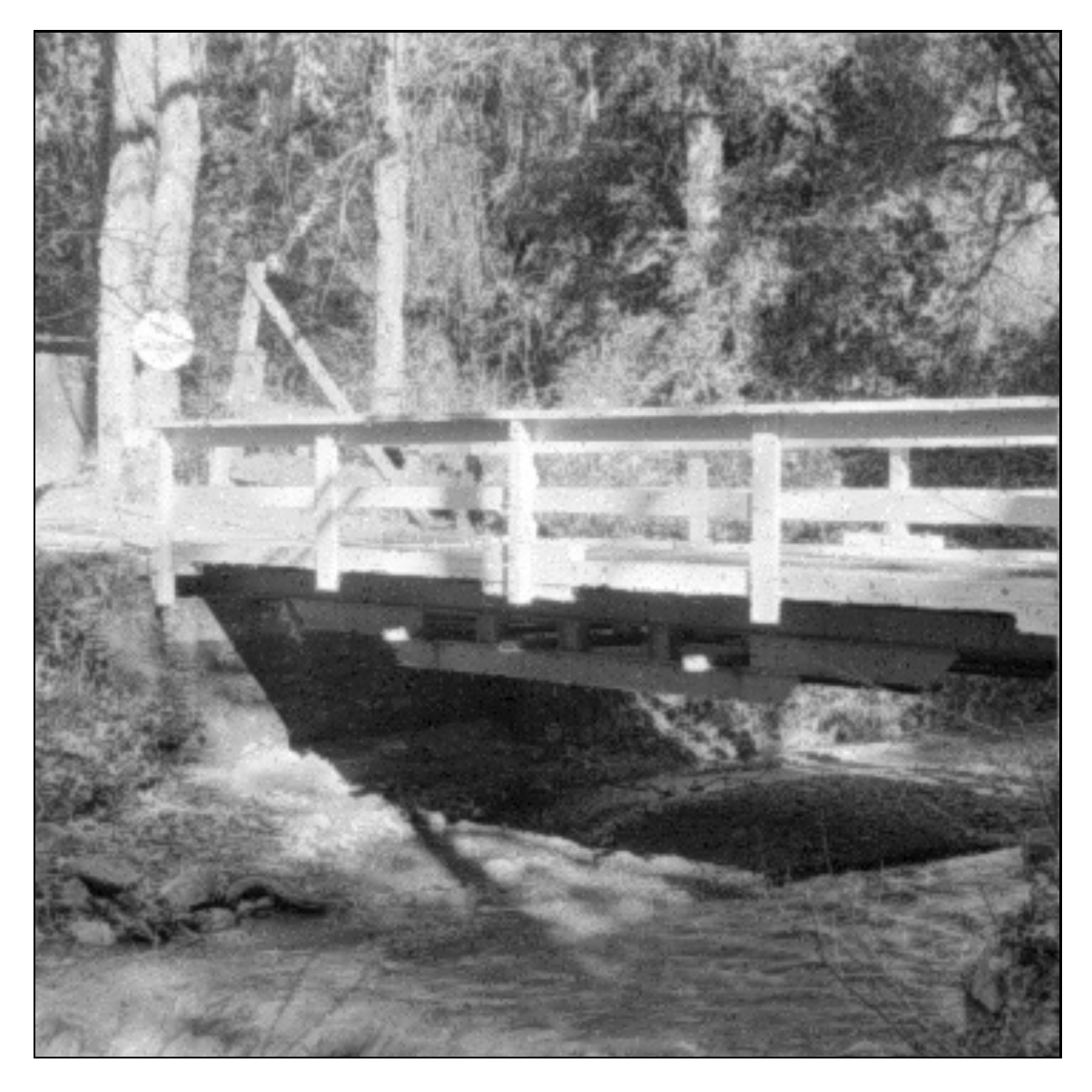}
\end{center}
\end{subfigure}
\caption{\textcolor{black}{Comparison of BIA and IA methods, for student-t regularised image denoising. First: Convergence rate for relative objective. Second: Convergence rate for relative gradient norm. Third: Input data. Fourth: Reconstruction.}}
\label{fig:mrf}
\end{figure}

\textcolor{black}{In all of these cases, the sparsity structure when utilised properly leads to significantly faster convergence rates with the BSOR method.}

\section{Conclusion}

In this paper, we propose to discretise the ISS flow with the Itoh--Abe discrete gradient. The resultant schemes exhibit a dissipative structure \eqref{eq:bdg_dissipation} related to the symmetrised Bregman distance of a function \(J\). This generalises the discrete gradient method for gradient flows, and can be viewed as a discrete gradient analogue to Bregman iterations. Building on previous studies of the Itoh--Abe discrete gradient method in the non-smooth, non-convex setting, we prove convergence guarantees of the Bregman Itoh--Abe discrete gradient method in such a setting.

We consider numerical examples motivated by linear systems and searching for sparse solutions. These results indicate that for sparse reconstructions, popular iterative solvers such as the SOR method can be significantly sped up by incorporating a Bregman step.

Future work is dedicated to proving convergence rates for the Bregman Itoh--Abe methods, and to compare the scheme to related methods such as the sparse Kaczmarz method \citep{lor14sfp}. Furthermore, we will study corresponding inverse scale space schemes using other discrete gradients, such as the mean value discrete gradient.

\begin{appendices}

\section{Counterexample of \thmref{thm:main} for \eqref{eq:bia}}
\label{sec:appendix_counter}

\textcolor{black}{In the following example, we describe an example of a constrained optimisation problem for which the iterates of the unmodified Bregman Itoh--Abe method \eqref{eq:bia} fails to converge to a limit set of stationary points.}
\begin{ex}
\textcolor{black}{As a starting point, we consider a \(C^2\)-smooth objective function \(W : \RR^2 \to \RR\) as described by Curry \citep{cur44}, for which the trajectory of a Euclidean gradient flow spirals along a gully, asymptotically trending towards the unit circle \(S^1 = \{[x,y]^T \in \RR^2 \; : \; x^2 + y^2 = 1\}\).}

\textcolor{black}{Denote by \([\tilde{x}^k, \tilde{y}^k]^T\) the iterates from the (standard) Itoh--Abe method for \(W\) with (arbitrary) time steps \(\tau \equiv 1\) and starting point \([\tilde{x}^0, \tilde{y}^0]^T\) in the gully. As each update of the method consists of moving to a local point of descent, the iterates of this method will also remain in this gully and spiral towards \(S^1\).}

\textcolor{black}{We assume that there exists compact, disjoint sets \(A, B \subset \RR^2\) with nonempty interior, such that \(B \cap S^1 \neq \emptyset\), and such that most of the iterates \(([\tilde{x}^k, \tilde{y}^k]^T)_{k\in\NN}\) are in \(A\). That is, for all \(k \in \NN\), the set \(\{i \leq k \; : \; [\tilde{x}^i,\tilde{y}^i]^T \in A\}\) is strictly greater than the set \(\{i \leq k \; : \; [\tilde{x}^i,\tilde{y}^i]^T \notin A\}\). Such a set \(A\) exists as long as the iterates are spiralling around the unit circle at approximately a steadily decreasing rate.}

\textcolor{black}{Next we define an objective function \(V: \RR^3 \to \RR\) as folllows. Let \(f_A, f_B: \RR^2 \to \RR\) be smooth support functions of \(A\) and \(B\) respectively, with disjoint support sets. That is, \(f_A(x,y) \in [0,1]\),  \(f_B(x,y) \in [0,1]\), and \(f_A(x,y) \cdot f_B(x,y) = 0\) for all \([x,y]^T \in \RR^2\), and furthermore, \(f_A(A) \equiv 1\) and \(f_B(B) \equiv 1\). We then consider the optimisation problem
\begin{equation}
\label{eq:counter}
\min_{z \geq 0 }V(x,y,z) := W(x,y) + z \del{f_A(x,y) - f_B(x,y)}.
\end{equation}
We implement the unmofidied Bregman Itoh--Abe method \eqref{eq:bia} with \(\tau \equiv 1\), \(\vec{x}^0 = [\tilde{x}^0, \tilde{y}^0, 0]^T\), and \(J(\vec{x}) = \|\vec{x}\|^2/2 + \chi_{[z\geq 0]}(z)\), where we use the shorthand notation \(\vec{x} = [x,y,z]^T\).}

\textcolor{black}{If \([x,y]^T \in B \cap S^1\), then \(\partial_z V(x,y,0) = -1\), meaning that \([x,y,0]^T\) is a nonstationary point of \(V\) restricted to \(\{z \geq 0\}\). Since the limit set of \(([\tilde{x}^k, \tilde{y}^k]])_{k\in\NN}\) is \(S^1\), it follows that the iterates \(([\tilde{x}^k, \tilde{y}^k,0])_{k\in\NN}\) admit nonstationary accumulation points for \(V\). It therefore remains to show that for all \(k \in \NN\), \(\vec{x}^k = [\tilde{x}^k, \tilde{y}^k,0]\) are admissible updates to the Bregman Itoh--Abe method \eqref{eq:bia} for \eqref{eq:counter}.}

\textcolor{black}{To verify that \(\vec{x}^k = [\tilde{x}^k, \tilde{y}^k,0]\) for all \(k\), we can argue by induction. If \(\vec{x}^k = [\tilde{x}^k, \tilde{y^k},0]\), then \(V(\tilde{x}^k, \tilde{y}^k, 0) = W(\tilde{x}^k, \tilde{y}^k)\) so \(\tilde{x}^{k+1}\) and \(\tilde{y}^{k+1}\) are admissible updates for the \(x\)- and \(y\)-coordinates.}

\textcolor{black}{It remains to verify the same for the \(z\)-coordinate as well. If \([\tilde{x}^{k+1}, \tilde{y}^{k+1}]^T \in A\), then  \(\partial_z V(\tilde{x}^{k+1}, \tilde{y}^{k+1}, 0) = 1\), so we have the update \(z^{k+1} = 0\) and \(p^{k+1}_3 = p^k_3 - 1 \in \chi_{[z \geq 0]}(z^{k+1})\). On the other hand, if \([\tilde{x}^{k+1}, \tilde{y}^{k+1}]^T \notin A\), then  \(\partial_z V(\tilde{x}^{k+1}, \tilde{y}^{k+1}, 0) \in [-1,1]\), so if in addition \(p^k_3 < -1\), then we have the update \(z^{k+1} = 0\), \(p^{k+1}_3 = p^k_3 - \partial_z V(\tilde{x}^{k+1}, \tilde{y}^{k+1}, 0)\). Denoting by \(M\) and \(N\) the cardinalities of  \(\{i \leq k \; : \; [\tilde{x}^i,\tilde{y}^i]^T \in A\}\) and \(\{i \leq k \; : \; [\tilde{x}^i,\tilde{y}^i]^T \notin A\}\) respectively, we know by construction of \(A\) that \(M - N \geq 1\). Furthermore, we observe that \(p^k_3 \leq N-M \leq -1\), which concludes the argument.}
\end{ex}

\section{$\ell^1$-regularised sparse SOR method}

In what follows, we describe the update rule for the Bregman Itoh--Abe method with \(V\) given in \eqref{eq:V_L1} and \(J\) given in \eqref{eq:J_l1}.

Denote by \(\tilde{x}^{k+1}_i\) the standard SOR update \eqref{eq:SOR} for the \(i\)th coordinate. Then the \(\ell^1\)-regularised sparse SOR method can be expressed as follows.
\begin{enumerate}
	\item If \(x^k_i = 0\) and \(|\tilde{x}^{k+1}_i - \gamma r^{k}_i| \leq \gamma + \lambda \tau/a^i_i\), then
	\[
	x^{k+1}_i = 0, \quad r^{k+1}_i = \frac{\gamma r^k_i - \tilde{x}^{k+1}_i}{\gamma+\lambda \tau/a^i_i}.
	\]
	\item Else if 
	\[
	|(\tau/2 +1) \tilde{x}^{k+1}_i + \gamma r^k_i  | \geq \gamma + \tau \lambda/a^i_i,
	\]
	then 
	\begin{align*}
	x^{k+1}_i &= \tilde{x}^{k+1}_i + \frac{\gamma r^k_i -\del{\gamma + \tau \lambda/a^i_i} \sgn\del{ \tilde{x}^{k+1}_i + \frac{\gamma}{\tau/2 + 1} r^k_i}}{\tau/2 + 1} \\
	r^{k+1}_i &= \sgn\del{ \tilde{x}^{k+1}_i + \frac{\gamma r^k_i}{\del{\tau/2 +1} }  }.
	\end{align*}
	\item Else if \(x^k_i \neq 0\) and
	\[
	\envert{\del{\tau/2 + 1} \tilde{x}^{k+1}_i + \gamma r^k_i  - (\lambda \tau/a^i_i) \sgn(x^k_i)} \leq \gamma,
	\]
	then set
	\begin{align*}
	x^{k+1}_i &= 0, \\
	r^{k+1}_i &= r^k_i + \frac{1}{\gamma} \del{\del{\tau/2 + 1} \tilde{x}^{k+1}_i - (\tau \lambda/a^i_i) \sgn(x^k_i) }.
	\end{align*}
	\item Else if \(x_i \neq 0\) and
	\begin{align*}
	&\envert{2 \del{\frac{a^i_i }{2} + \frac{a^i_i }{\tau}} \tilde{x}^{k+1}_i + \del{\frac{2 a^i_i\gamma}{\tau} +\lambda} \sgn(x^k_i) }^2 \\
	&\leq\del{b_i - \inner{\vec{a}^i,\vec{y}^{k,i-1}} + \del{\frac{2a^i_i \gamma}{\tau} +\lambda} \sgn(x^k_i)}^2\ldots \\
	 &\qquad\qquad+8\lambda \del{\frac{a^i_i}{2} + \frac{a^i_i}{\tau}}|x^k_i|,
	\end{align*}
	then set
	{\small
	\begin{align*}{\scriptstyle
	&x^{k+1}_i = \tilde{x}^{k+1}_i + \frac{\sgn(x^k_i)}{2\del{\frac{a^i_i}{2} + \frac{a^i_i}{\tau}}}\left(\frac{2a^i_i \gamma}{\tau} +\lambda \right. \ldots \\
	&\left. -\sqrt{\del{b_i - \inner{\vec{a}^i,\vec{x}^k} + \del{\frac{2 a^i_i \gamma}{\tau} +\lambda} \sgn(x^k_i)}^2 + 8\lambda \del{\frac{a^i_i}{2} + \frac{a^i_i}{\tau}}|x^k_i|}\right), \\
	&r^{k+1}_i = - r^k_i.
	}\end{align*}}
\end{enumerate}

\end{appendices}


\bibliographystyle{spmpsci}      

\bibliography{dg_refs_smooth_abbrev}

\begin{thebibliography}{10}
\providecommand{\url}[1]{{#1}}
\providecommand{\urlprefix}{URL }
\expandafter\ifx\csname urlstyle\endcsname\relax
  \providecommand{\doi}[1]{DOI~\discretionary{}{}{}#1}\else
  \providecommand{\doi}{DOI~\discretionary{}{}{}\begingroup
  \urlstyle{rm}\Url}\fi

\bibitem{amb08}
Ambrosio, L., Gigli, N., Savare, G.: Gradient Flows: In Metric Spaces and in
  the Space of Probability Measures, 2nd edn.
\newblock Lectures in Mathematics. ETH Z{\"u}rich. Birkh{\"a}user Basel (2008)

\bibitem{bec13}
Beck, A., Tetruashvili, L.: On the convergence of block coordinate descent type
  methods.
\newblock SIAM J. Optim. \textbf{23}(4), 2037--2060 (2013)

\bibitem{ben17}
Benning, M., Betcke, M.M., Ehrhardt, M.J., Sch{\"o}nlieb, C.B.: {Choose your
  path wisely: gradient descent in a {B}regman distance framework}.
\newblock ArXiv e-prints  (2017).
\newblock \urlprefix\url{http://arxiv.org/abs/1712.04045}

\bibitem{ben18}
Benning, M., Burger, M.: Modern regularization methods for inverse problems.
\newblock Acta Numerica \textbf{27}, 1--111 (2018)

\bibitem{bet18}
Betancourt, M., Jordan, M.I., Wilson, A.C.: On symplectic optimization.
\newblock arXiv e-prints  (2018).
\newblock \urlprefix\url{http://arxiv.org/abs/1802.03653}

\bibitem{bre67}
Bregman, L.M.: The relaxation method of finding the common point of convex sets
  and its application to the solution of problems in convex programming.
\newblock USSR Comput. Math. Math. Phys. \textbf{7}(3), 200--217 (1967)

\bibitem{bur16bregman}
Burger, M.: {B}regman distances in inverse problems and partial differential
  equations.
\newblock In: Advances in mathematical modeling, optimization and optimal
  control, pp. 3--33. Springer (2016)

\bibitem{bur16}
Burger, M., Gilboa, G., Moeller, M., Eckardt, L., Cremers, D.: Spectral
  decompositions using one-homogeneous functionals.
\newblock SIAM J. Imag. Sci. \textbf{9}(3), 1374--1408 (2016)

\bibitem{bur06}
Burger, M., Gilboa, G., Osher, S., Xu, J.: Nonlinear inverse scale space
  methods.
\newblock Commun. Math. Sci. \textbf{4}(1), 179--212 (2006)

\bibitem{bur13iss}
Burger, M., Moeller, M., Benning, M., Osher, S.: An adaptive inverse scale
  space method for compressed sensing.
\newblock Math. Comput. \textbf{82}(281), 269--299 (2013)

\bibitem{bur07}
Burger, M., Resmerita, E., He, L.: Error estimation for {B}regman iterations
  and inverse scale space methods in image restoration.
\newblock Comput. \textbf{81}(2-3), 109--135 (2007)

\bibitem{cai09}
Cai, J.F., Osher, S., Shen, Z.: Linearized {B}regman iterations for compressed
  sensing.
\newblock Math. Comput. \textbf{78}(267), 1515--1536 (2009)

\bibitem{cel18}
Celledoni, E., Eidnes, S., Owren, B., Ringholm, T.: Dissipative numerical
  schemes on {R}iemannian manifolds with applications to gradient flows.
\newblock SIAM Journal on Scientific Computing \textbf{40}(6), A3789--A3806
  (2018)

\bibitem{cen92}
Censor, Y., Zenios, S.A.: Proximal minimization algorithm with $d$-functions.
\newblock J. Optim. Theory. Appl. \textbf{73}(3), 451--464 (1992)

\bibitem{cla73}
Clarke, F.H.: Necessary conditions for nonsmooth problems in optimal control
  and the calculus of variations.
\newblock Ph.D. thesis, University of Washington (1973)

\bibitem{cla90}
Clarke, F.H.: Optimization and Nonsmooth Analysis, 1st edn.
\newblock Classics in Applied Mathematics. SIAM, Philadelphia (1990)

\bibitem{cur44}
Curry, H.B.: The method of steepest descent for non-linear minimization
  problems.
\newblock Q. Appl. Math. \textbf{2}(3), 258--261 (1944)

\bibitem{don10}
Dong, B., Mao, Y., Osher, S., Yin, W.: Fast linearized {B}regman iteration for
  compressive sensing and sparse denoising.
\newblock Commun. Math. Sci. \textbf{8}(1), 93--111 (2010)

\bibitem{eck93}
Eckstein, J.: Nonlinear proximal point algorithms using {B}regman functions,
  with applications to convex programming.
\newblock Math. Oper. Res. \textbf{18}(1), 202--226 (1993)

\bibitem{eft18}
Eftekhari, A., Vandereycken, B., Vilmart, G., Zygalakis, K.C.: {Explicit
  Stabilised Gradient Descent for Faster Strongly Convex Optimisation}.
\newblock ArXiv e-prints  (2018).
\newblock \urlprefix\url{http://arxiv.org/abs/1805.07199}

\bibitem{rii18smooth}
Ehrhardt, M.J., Riis, E.S., Ringholm, T., Sch{\"o}nlieb, C.B.: A geometric
  integration approach to smooth optimisation: Foundations of the discrete
  gradient method.
\newblock ArXiv e-prints  (2018).
\newblock \urlprefix\url{http://arxiv.org/abs/1805.06444}

\bibitem{eke99}
Ekeland, I., T{\'e}man, R.: Convex Analysis and Variational Problems, 1st edn.
\newblock SIAM, Philadelphia, PA, USA (1999)

\bibitem{gil16}
Gilboa, G., Moeller, M., Burger, M.: Nonlinear spectral analysis via
  one-homogeneous functionals: Overview and future prospects.
\newblock J. Math. Imaging Vision \textbf{56}(2), 300--319 (2016)

\bibitem{gol09}
Goldstein, T., Osher, S.: The split {B}regman method for l1-regularized
  problems.
\newblock SIAM J. Imag. Sci. \textbf{2}(2), 323--343 (2009)

\bibitem{gon96}
Gonzalez, O.: Time integration and discrete {H}amiltonian systems.
\newblock J. Nonlinear Sci. \textbf{6}(5), 449--467 (1996)

\bibitem{gri17}
Grimm, V., McLachlan, R.I., McLaren, D.I., Quispel, G.R.W., Sch{\"o}nlieb,
  C.B.: {Discrete gradient methods for solving variational image regularisation
  models}.
\newblock J. Phys. A \textbf{50}(29), 295201 (2017)

\bibitem{hai13}
Hairer, E., Lubich, C.: Energy-diminishing integration of gradient systems.
\newblock IMA J. Numer. Anal. \textbf{34}(2), 452--461 (2013)

\bibitem{hai06}
Hairer, E., Lubich, C., Wanner, G.: Geometric numerical integration:
  structure-preserving algorithms for ordinary differential equations, vol.~31,
  2nd edn.
\newblock Springer Science \& Business Media, Berlin (2006)

\bibitem{her15}
Hern{\'a}ndez-Solano, Y., Atencia, M., Joya, G., Sandoval, F.: A discrete
  gradient method to enhance the numerical behaviour of {H}opfield networks.
\newblock Neurocomputing \textbf{164}, 45 -- 55 (2015)

\bibitem{ito88}
Itoh, T., Abe, K.: {H}amiltonian-conserving discrete canonical equations based
  on variational difference quotients.
\newblock J. Comput. Phys. \textbf{76}(1), 85--102 (1988)

\bibitem{jah07}
Jahn, J.: Introduction to the Theory of Nonlinear Optimization, 3rd edn.
\newblock Springer Publishing Company, Incorporated, Berlin (2007)

\bibitem{kiw97}
Kiwiel, K.C.: Proximal minimization methods with generalized {B}regman
  functions.
\newblock SIAM J. Control Optim. \textbf{35}(4), 1142--1168 (1997)

\bibitem{lor14sfp}
Lorenz, D.A., Sch{\"o}pfer, F., Wenger, S.: The linearized {B}regman method via
  split feasibility problems: analysis and generalizations.
\newblock SIAM J. Imag. Sci. \textbf{7}(2), 1237--1262 (2014)

\bibitem{lor14}
Lorenz, D.A., Wenger, S., Sch{\"o}pfer, F., Magnor, M.: A sparse {K}aczmarz
  solver and a linearized {B}regman method for online compressed sensing.
\newblock arXiv e-prints  (2014).
\newblock \urlprefix\url{http://arxiv.org/abs/1403.7543}

\bibitem{mad18}
Maddison, C.J., Paulin, D., Teh, Y.W., O'Donoghue, B., Doucet, A.:
  {H}amiltonian descent methods.
\newblock arXiv e-prints  (2018).
\newblock \urlprefix\url{http://arxiv.org/abs/1809.05042}

\bibitem{mcl01}
McLachlan, R.I., Quispel, G.R.W.: Six lectures on the geometric integration of
  {ODE}s, p. 155–210.
\newblock London Mathematical Society Lecture Note Series. Cambridge University
  Press, Cambridge (2001)

\bibitem{mcl99}
McLachlan, R.I., Quispel, G.R.W., Robidoux, N.: Geometric integration using
  discrete gradients.
\newblock Philos. Trans. R. Soc. Lond. Ser. A Math. Phys. Eng. Sci.
  \textbf{357}(1754), 1021--1045 (1999)

\bibitem{miy18}
Miyatake, Y., Sogabe, T., Zhang, S.L.: On the equivalence between {SOR}-type
  methods for linear systems and the discrete gradient methods for gradient
  systems.
\newblock J. Comput. Appl. Math. \textbf{342}, 58--69 (2018)

\bibitem{nes83}
Nesterov, Y.: A method of solving a convex programming problem with convergence
  rate ${O}(1/k^2)$.
\newblock In: Soviet Mathematics Doklady, vol.~27, pp. 372--376 (1983)

\bibitem{och14}
Ochs, P., Chen, Y., Brox, T., Pock, T.: i{P}iano: Inertial proximal algorithm
  for nonconvex optimization.
\newblock SIAM J. Imag. Sci. \textbf{7}(2), 1388--1419 (2014)

\bibitem{osh05}
Osher, S., Burger, M., Goldfarb, D., Xu, J., Yin, W.: An iterative
  regularization method for total variation-based image restoration.
\newblock Multiscale Model. Simul. \textbf{4}(2), 460--489 (2005)

\bibitem{per90}
Perona, P., Malik, J.: Scale-space and edge detection using anisotropic
  diffusion.
\newblock IEEE Trans. Pattern Anal. Mach. Intell. \textbf{12}(7), 629--639
  (1990)

\bibitem{pol96}
Poliquin, R., Rockafellar, R.: Prox-regular functions in variational analysis.
\newblock Trans. Amer. Math. Soc. \textbf{348}(5), 1805--1838 (1996)

\bibitem{qui96}
Quispel, G.R.W., Turner, G.S.: Discrete gradient methods for solving {ODE}s
  numerically while preserving a first integral.
\newblock J. of Phys. A \textbf{29}(13), 341--349 (1996)

\bibitem{rii18nonsmooth}
Riis, E.S., Ehrhardt, M.J., Quispel, G.R.W., Sch{\"o}nlieb, C.B.: A geometric
  integration approach to nonsmooth, nonconvex optimisation.
\newblock ArXiv e-prints  (2018).
\newblock \urlprefix\url{http://arxiv.org/abs/1807.07554}

\bibitem{rin18}
Ringholm, T., Lazi{\'c}, J., Sch{\"o}nlieb, C.B.: Variational image
  regularization with {E}uler's elastica using a discrete gradient scheme.
\newblock SIAM J. Imag. Sci. \textbf{11}(4), 2665--2691 (2018)

\bibitem{roc85}
Rockafellar, R.: Maximal monotone relations and the second derivatives of
  nonsmooth functions.
\newblock In: Annales de l'Institut Henri Poincare (C) Non Linear Analysis,
  vol.~2, pp. 167--184. Elsevier (1985)

\bibitem{roc15}
Rockafellar, R.T.: Convex analysis, 1st edn.
\newblock Princeton Landmarks in Mathematics and Physics. Princeton university
  press, Princeton (2015)

\bibitem{san17}
Santambrogio, F.: $\{${E}uclidean, metric, and {W}asserstein$\}$ gradient
  flows: an overview.
\newblock Bull. Math. Sci. \textbf{7}(1), 87--154 (2017)

\bibitem{sch01}
Scherzer, O., Groetsch, C.: Inverse scale space theory for inverse problems.
\newblock In: International Conference on Scale-Space Theories in Computer
  Vision, pp. 317--325. Springer (2001)

\bibitem{sch18}
Schmidt, M.F., Benning, M., Sch{\"o}nlieb, C.B.: Inverse scale space
  decomposition.
\newblock Inverse Problems \textbf{34}(4), 179--212 (2018)

\bibitem{sch18kaczmarz}
Sch{\"o}pfer, F., Lorenz, D.A.: Linear convergence of the randomized sparse
  {K}aczmarz method.
\newblock Math. Program. \textbf{173}(1), 509--536 (2019)

\bibitem{sci17}
Scieur, D., Roulet, V., Bach, F., d'Aspremont, A.: Integration methods and
  optimization algorithms.
\newblock In: Advances in Neural Information Processing Systems, pp. 1109--1118
  (2017)

\bibitem{su16}
Su, W., Boyd, S., Candes, E.J.: A differential equation for modeling
  {N}esterov’s accelerated gradient method: theory and insights.
\newblock J. Mach. Learn. Res. \textbf{17}(153), 1--43 (2016)

\bibitem{teb92}
Teboulle, M.: Entropic proximal mappings with applications to nonlinear
  programming.
\newblock Math. Oper. Res. \textbf{17}(3), 670--690 (1992)

\bibitem{wib16}
Wibisono, A., Wilson, A.C., Jordan, M.I.: A variational perspective on
  accelerated methods in optimization.
\newblock Proceedings of the National Academy of Sciences \textbf{113}(47),
  E7351--E7358 (2016)

\bibitem{wil16}
{Wilson}, A.C., {Recht}, B., {Jordan}, M.I.: {A {L}yapunov Analysis of Momentum
  Methods in Optimization}.
\newblock arXiv e-prints  (2016).
\newblock \urlprefix\url{http://arxiv.org/abs/1611.02635}

\bibitem{wri15}
Wright, S.J.: Coordinate descent algorithms.
\newblock Mathematical Programming \textbf{1}(151), 3--34 (2015)

\bibitem{yin08}
Yin, W., Osher, S., Goldfarb, D., Darbon, J.: {B}regman iterative algorithms
  for $\ell_1$-minimization with applications to compressed sensing.
\newblock SIAM J. Imag. Sci. \textbf{1}(1), 143--168 (2008)

\bibitem{you71}
Young, D.M.: Iterative Solution of Large Linear Systems, 1st edn.
\newblock Computer science and applied mathematics. Academic Press, Inc.,
  Orlando, Florida (1971)

\bibitem{zha11}
Zhang, X., Burger, M., Osher, S.: A unified primal-dual algorithm framework
  based on {B}regman iteration.
\newblock J. Sci. Comput. \textbf{46}(1), 20--46 (2011)

\end{thebibliography}

\end{document}